\newtheorem{defn}{Definition}[section]
\newtheorem{thm}{Theorem}
\newtheorem{lem}[defn]{Lemma}
\newtheorem{prop}[defn]{Proposition}
\newtheorem{rem}[defn]{Remark}
\begin{document}

\title[Large Deviations for Weighted Sums]{Large Deviations for Weighted Sums of Stretched Exponential Random Variables\thanks{Supported
    by NSF CMMI-1234100 and ARO W911NF-12-1-0222}} 

%\DEDICATORY{Dedicated to the memory of ...} % Optional

%%%%%%%%%%%%%%%%%%%%%%%%%%%%%%%%%%%%%%%%%%%%%%%%%%%%%%%%%%%%%%%%%%%
%%                                                               %%
%% Authors (please edit and customize):                          %%
%%                                                               %%
%%%%%%%%%%%%%%%%%%%%%%%%%%%%%%%%%%%%%%%%%%%%%%%%%%%%%%%%%%%%%%%%%%%

\setcounter{footnote}{3}

\author{Nina Gantert}
 \address{Technische Universit\"at M\"unchen, Germany} 
\email{gantert@ma.tum.de}

\author{Kavita Ramanan}
 \address{Brown University, USA} 
  \email{Kavita\_Ramanan@brown.edu}

\author{Franz Rembart} 
\address{University of Oxford, UK}
\email{franz.rembart@stats.ox.ac.uk}

% University of Washington, USA.
%    \EMAIL{burdzy@math.washington.edu}}
%  \and %% remove this line and below if single author
%  Djalil~Chafa\"{\i}\footnote{Universit\'e Paris-Est Marne-la-Vall\'ee,
%    France. \BEMAIL{djalil@chafai.net} \url{http://djalil.chafai.net/}}}%AUTHORS
%% Type \and between all consecutive authors (not only before the last author).
%% Note: you may use \BEMAIL to force a line break before e-mail display.

%% Here is a compact example with two authors with same affiliation
%% \AUTHORS{%
%%  Michael~First\footnote{Some University. \EMAIL{mf,js@uni.edu}
%%  \and 
%%  John~Second\footnotemark[2]}%AUTHORS
%% Note: The \footnotemark is the footnote number that you wish to reuse. Here
%% it is [2] (we took into account the footnote generated by \thanks in title).

%%%%%%%%%%%%%%%%%%%%%%%%%%%%%%%%%%%%%%%%%%%%%%%%%%%%%%%%%%%%%%%%%%%
%%                                                               %%
%% Please edit and customize the following items:                %%
%%                                                               %%
%%%%%%%%%%%%%%%%%%%%%%%%%%%%%%%%%%%%%%%%%%%%%%%%%%%%%%%%%%%%%%%%%%%

\keywords{Large deviations, weighted sums, subexponential random variables, stretched exponential random variables, 
  self-normalized weights, quenched and annealed large deviations,
  random projections,   kernels, non-parametric regression} 
% Separate items with ;

\subjclass{60F10, 62G32} % Edit. Separate items with ;
%\AMSSUBJSECONDARY{FIXME:} % Optional, separate items with ;

%\SUBMITTED{} % Edit.
%\ACCEPTED{} % Edit.

%%%%%%%%%%%%%%%%%%%%%%%%%%%%%%%%%%%%%%%%%%%%%%%%%%%%%%%%%%%%%%%%%%%
%%                                                               %%
%% Please uncomment and edit if you have an arXiv ID:            %%
%%                                                               %%
%%%%%%%%%%%%%%%%%%%%%%%%%%%%%%%%%%%%%%%%%%%%%%%%%%%%%%%%%%%%%%%%%%%

%\ARXIVID{NNNN.NNNNvn} % Edit.
%\ARXIVPASSWORD{xxxxx} % Edit.
%\HALID{hal-NNN} % Edit.

%%%%%%%%%%%%%%%%%%%%%%%%%%%%%%%%%%%%%%%%%%%%%%%%%%%%%%%%%%%%%%%%%%%
%%                                                               %%
%% The following items will be set by the Managing Editor.       %%
%%                                                               %%
%%%%%%%%%%%%%%%%%%%%%%%%%%%%%%%%%%%%%%%%%%%%%%%%%%%%%%%%%%%%%%%%%%%

%\VOLUME{0}
%\YEAR{2014}
%\PAPERNUM{0}
%\DOI{vVOL-PID}

%%%%%%%%%%%%%%%%%%%%%%%%%%%%%%%%%%%%%%%%%%%%%%%%%%%%%%%%%%%%%%%%%%%
%%                                                               %%
%% Please edit and customize the abstract:                       %%
%%                                                               %%
%%%%%%%%%%%%%%%%%%%%%%%%%%%%%%%%%%%%%%%%%%%%%%%%%%%%%%%%%%%%%%%%%%%

\begin{abstract}
We consider the probability that a weighted sum of $n$ 
  i.i.d. random variables $X_j, j = 1, \ldots, n$, with stretched exponential tails is
  larger than its expectation and   determine the rate of its decay, 
  under suitable conditions on the weights. We show that the decay is
  subexponential, and  identify the rate function in terms of the
   tails of $X_j$ and the weights. 
Our result generalizes the large deviation principle given by Kiesel
and Stadtm\"{u}ller 
\cite{KieSta00} as well as the tail asymptotics for sums of i.i.d. random 
variables provided by Nagaev \cite{Nag69,Nag79}.  
As an application of our result, motivated by random projections of
high-dimensional vectors, we consider
  the case of random, self-normalized weights that are independent   
  of the sequence $\{X_j\}_{j \in \mathbb N}$, identify the decay rate for both the quenched and annealed large
  deviations in this case, and show that they coincide. As another
  example we consider weights derived from kernel functions that arise
  in non-parametric regression. 
\end{abstract}

\maketitle

%%%%%%%%%%%%%%%%%%%%%%%%%%%%%%%%%%%%%%%%%%%%%%%%%%%%%%%%%%%%%%%%%%%
%%                                                               %%
%% Please add your own macros and environments below:            %%
%%                                                               %%
%% If possible, avoid using \def and use instead \newcommand     %%
%% If possible, avoid defining your own environments, and use    %%
%% instead the environments already defined by ejpecp:           %%
%%  assumption, assumptions, claim, condition, conjecture,       %%
%%  corollary, definition, definitions, example, exercise, fact, %%
%%  facts, heuristics, hypothesis, hypotheses, lemma, notation,  %%
%%  notations, problem, proposition, remark, theorem             %%
%%                                                               %%
%%%%%%%%%%%%%%%%%%%%%%%%%%%%%%%%%%%%%%%%%%%%%%%%%%%%%%%%%%%%%%%%%%%

%\newcommand{\ABS}[1]{\left(#1\right)} % example of author macro
%\newcommand{\veps}{\varepsilon} % another example of author macro

\setlength{\textheight}{22.25cm} \setlength{\textwidth}{17.25cm} \setlength{\oddsidemargin}{-20pt} \setlength{\evensidemargin}{-20pt}
\setlength{\topmargin}{-41pt}

\setlength{\unitlength}{1cm} \setlength{\parskip}{3pt} \setlength{\parindent}{0pt}

%\font \mrm=cmr8 scaled \magstep 0 \DeclareMathAlphabet{\mathbfit}{OT1}{cmr}{bx}{it} \DeclareRobustCommand{\vec}[1]{\mathbfit{#1}}

\newcommand{\erw}[1]{\mbox{E} [#1] }
\renewcommand{\baselinestretch}{1.1}

\newcommand{\bea}{\begin{eqnarray*}}
\newcommand{\eea}{\end{eqnarray*}}
\newcommand{\be}{\begin{eqnarray}}
\newcommand{\ee}{\end{eqnarray}}
\newcommand{\beq}{\begin{equation}}
\newcommand{\eeq}{\end{equation}}
\newcommand{\schw}{\stackrel{\mathcal{D}}{\longrightarrow}}
\newcommand{\stab}{\stackrel{\mathcal{D}_{st}}{\longrightarrow}}
\newcommand{\gstab}{\stackrel{\mathcal{G}-\mathcal{D}_{st}}{\longrightarrow}}
\newcommand{\fstab}{\stackrel{\mathcal{F}-\mathcal{D}_{st}}{\longrightarrow}}
\newcommand{\zd}{\mathbb{Z}^d}

\newtheorem{assumptionletter}{Assumption}
\renewcommand{\theassumptionletter}{\Alph{assumptionletter}}
\def\ind{{1\!\!1}}

%%%%%%%%%%%%%%%%%%%%%%%%%%%%%%%%%%%%%%%%%%%%%%%%%%%%%%%%%%%%%%%%%%%
%%                                                               %%
%% No macro definitions below this line please!                  %%
%%                                                               %%
%%%%%%%%%%%%%%%%%%%%%%%%%%%%%%%%%%%%%%%%%%%%%%%%%%%%%%%%%%%%%%%%%%%

%\begin{document}

%%%%%%%%%%%%%%%%%%%%%%%%%%%%%%%%%%%%%%%%%%%%%%%%%%%%%%%%%%%%%%%%%%%
%%                                                               %%
%% No need for \maketitle.                                       %%
%%                                                               %%
%%%%%%%%%%%%%%%%%%%%%%%%%%%%%%%%%%%%%%%%%%%%%%%%%%%%%%%%%%%%%%%%%%%

%%%%%%%%%%%%%%%%%%%%%%%%%%%%%%%%%%%%%%%%%%%%%%%%%%%%%%%%%%%%%%%%%%%
%%                                                               %%
%% Please replace what follows by the body of your article       %%
%% (up to the bibliography):                                     %%
%%                                                               %%
%%%%%%%%%%%%%%%%%%%%%%%%%%%%%%%%%%%%%%%%%%%%%%%%%%%%%%%%%%%%%%%%%%%

\section{Introduction} \label{motivation}
Let $\{X_j\}_{j \in \mathbb N}$ be a sequence of independent and
identically distributed (i.i.d.)  random variables on a probability space $(\Omega, \mathcal F, \mathbb{P})$ with values in 
$\mathbb R$ and with finite expectation $m:=\mathbb{E}[X_1] < \infty$.
For $n \in \mathbb N$, let $S_n:= \sum_{j=1}^n X_j$, denote the partial sum and $\bar{S}_n :=
S_n/n$ the empirical mean values. 
The strong law of large numbers implies that $\bar{S}_n \to m$ almost surely.
Cram\'er's Theorem on large deviations tells us that, if the $X_j$
have finite exponential moments, that is, there exists $t > 0$ such
that 
\begin{equation} \label{expmom}
  M(t):=\mathbb{E}[\exp\left(tX_1\right)] < \infty, 
% \quad \hbox{ for  some }\, t > 0,
 \end{equation}
then for any $x > m$, the probability 
$\mathbb{P}\left(\bar{S}_n\geq x\right)$ decays exponentially. More precisely,
\begin{equation*} \lim_{n \rightarrow \infty} \frac{1}{n} \log
  \mathbb{P}\left(\bar{S}_n \geq x\right) = -\Lambda^{*}(x),  \end{equation*}
where 
$\Lambda^{*}(x):=\sup_{t \geq 0} \left\{ tx-\log M(t) \right\} > 0$. We will refer to this case as the ``light-tailed'' case.
It is well known that if $M(t) = + \infty$ for all $t > 0$, the probabilities
$P\left(\bar{S}_n \geq x\right)$ decay slower than exponentially. The reason is that, in contrast to when \eqref{expmom} holds, a ``deviation'' of the type
$\bar{S}_n \geq x$ is produced by the event that {\sl just one} of the random variables takes a large value.
For instance, if there is $r \in (0,1)$ and $c > 0$ such that
$P\left(X_1 \geq t\right) = c \exp(-t^r)$ for $t$ large enough, then
\begin{equation} \label{partcase}
\lim_{n \rightarrow \infty} \frac{1}{n^r} \log
\mathbb{P}\left(\bar{S}_n \geq x\right) = -(x-m)^r, \quad \forall x > m.  \end{equation}
The result in \eqref{partcase} goes back to \cite{Nag69} and it will also
follow from our main result, Theorem \ref{MR}.
Cram\'er's Theorem was generalized by \cite{KieSta00} to weighted sums of
i.i.d.\ random variables, see Section \ref{sec-lt} below for a precise statement of their results.
Our main result, Theorem \ref{MR}, gives a corresponding statement for
weighted sums of i.i.d.\ random variables with stretched exponential tails. 
One motivation to consider weighted sums, which is elaborated upon in 
Section \ref{Example1},  comes from random
projections of high-dimensional vectors, which are of relevance in 
asymptotic geometric analysis  \cite{DiaFre84,Mec12} and data analysis 
\cite{BinMan01}.   
 Another motivation stems from
statistics (kernel functions, moving averages) considered for the light-tailed case in \cite{KieSta00}, 
since stretched exponential random variables 
arise in many applications.
See Section \ref{Example2} for an example.

This article is organized as follows: We first present the result and
the regularity conditions from \cite{KieSta00} in Section \ref{sec-lt}.  Our
main result, Theorem \ref{MR}, is given in Section \ref{sec22}, and
its proof is presented in Section \ref{sec23}.  Finally, in Section
\ref{Example1}, we give an application to random weights,  and in
Section \ref{Example2}, we consider weights derived from kernel
functions that arise in non-parametric regression. 

\section{The Light-Tailed Case}
\label{sec-lt}

For $n \in  \mathbb N$, let $\{a_j(n)\}_{j \in \mathbb N}$ be a sequence of real numbers which we will call weights.
% where $n \in  \mathbb N$, such that for $\nu=1$ (and hence for all $\nu \in \mathbb N)$
%\begin{equation} \sum \limits_{j=1}^{\infty} |a_j(n)|^{\nu} < \infty \text{ for every } n \in \mathbb %N. \label{eq:n1} \end{equation}
For $n \in \mathbb N$ define the weighted sum
\begin{equation}\label{eq:n2}
 \bar{S}_n:=\sum \limits_{j=1}^{n} a_j(n) X_j  \end{equation}
and the measure $\mu_{n}$ on ${\mathcal B}\left(\mathbb R
\right)$, the set of Borel sets in $\mathbb{R}$,   as 
\begin{equation}\mu_{n}\left(A\right):=\mathbb{P}\left(\bar{S}_n \in
    A\right), \quad A \in {\mathcal B}(\mathbb{R}).  \label{eq22}\end{equation} 
%Note that $\bar{S}_n$ can be defined almost surely since we assume that $\mathbb{E}[X_1^2]< \infty$ 
%and $\sum \limits_{j=1}^{\infty} a_j(n)^2<\infty$. 
When the $\{X_j\}_{j \in \mathbb N}$ have finite exponential moments,
that is the moment generating 
function $M(t)$ defined in \eqref{expmom} is finite for all $t \in \mathbb{R}$, 
a large deviation principle  for the sequence of weighted sums
$\{\bar{S}_n\}_{n \in \mathbb N}$ was established  in \cite{KieSta00} 
under suitable assumptions on the weights, see Assumption \ref{as_A}
below.   The ``classical'' case of Cram\'{e}r's theorem corresponds to 
$a_j(n) = 1/n, j= 1, 2,  \ldots ,n$, $n \in \mathbb N$.  
%the cumulant moment-generating function of $X_1$  satisfies
\begin{assumptionletter}
         \label{as_A}
         \renewcommand{\theenumi}{(A.\arabic{enumi})}
         \renewcommand{\labelenumi}{\theenumi}
\begin{enumerate}
\item \label{as_A1} 
There exists a sequence of real numbers $\{s_{\nu}\}_{\nu \in \mathbb
  N}$ such that $s_{\nu}\not=0$ for all $\nu \in \mathbb N$, the limit
$s:=\lim\limits_{\nu \rightarrow \infty} \sqrt[\nu]{|s_{\nu}|}$ exists
and 
\begin{equation} \sum \limits_{j=1}^{n} {a_j(n)}^{\nu}=\frac{s_{\nu}}{n^{\nu-1}}R(\nu, n) \text{ for all }\nu \text{ and }n \in \mathbb N, \label{eq:e3} \end{equation} 
for some function $R: {\mathbb N}^2 \rightarrow \mathbb R$ that
satisfies,  for every $\nu \in \mathbb N$, $R\left(\nu, n \right) \rightarrow 1$ as $n \rightarrow \infty$.
\item \label{as_A2} 
There exist sequences  $\{r_{\nu}\}_{\nu \in \mathbb N}$ and
$\{\delta_n\}_{n \in \mathbb N}$ such that 
$\limsup_{\nu \rightarrow \infty} \sqrt[\nu]{r_\nu} \leq 1$,
$\lim_{n \rightarrow \infty} \delta_n = 0$ and the error term satisfies
\begin{equation} |R(\nu, n) - 1| \leq r_{\nu} \frac{(1+\delta_n)^{\nu}}{n}\text{ for all }\nu\text{ and }n. \label{eq:e4} \end{equation}
\end{enumerate}
\end{assumptionletter}

Now, let $\Lambda$ denote the cumulant (or log moment) generating
function of $X_1$, and let $\{c_{\nu}\}_{\nu \in \mathbb N}$ be the sequence of coefficients that
arise in the power series expansion for $\Lambda$: 
\begin{equation} \Lambda(t):=\log M(t) =  \sum \limits_{\nu=1}^{\infty} \frac{c_{\nu}}{\nu!}t^{\nu}, \quad t \in \mathbb{R}.
  \hspace{4mm} \label{eq:n3b}\end{equation}  
Also, for $t > 0$, let $\chi(t):=\sum \limits_{\nu=1}^{\infty}
\frac{s_{\nu}c_{\nu}}{ \nu!}t^{\nu}$, and let $\chi^{*}$ denote its 
Legendre-Fenchel transform:  
\begin{equation} \chi^{*}(t):=\sup \limits_{t \in \mathbb R}
  \{tx-\chi(t)\}.\label{eq:n4} \end{equation}
It was shown in \cite{KieSta00} that under Assumption \ref{as_A} the
sequence of measures $\{\mu_{n}\}_{n \in \mathbb N}$ on
${\mathcal B} (\mathbb R)$ defined in
\eqref{eq22} satisfies a large deviation principle with speed $n$ and
rate function $\chi^{*}$.   Recall that this means that 
\[  - \inf_{x \in A^\circ} \chi^* (x) \leq  \liminf_{n  \rightarrow \infty}
\frac{1}{n} \mu_n (A^\circ) \leq \limsup_{n \rightarrow \infty}
\frac{1}{n}\mu_n (\bar{A}) \leq - \inf_{x \in \bar{A}} \chi^* (x),    \quad
\forall A
\in {\mathcal B} (\mathbb R),  \]
where $A^\circ$ and $\bar{A}$, respectively, represent the interior
and the closure of the set $A$.  

%More precisely, the following holds.

%\begin{thm}[Large Deviations for Weighted Sums, Light Tails] \label{kiesel}
%Let $(X_j)$ be a sequence of i.i.d. random variables  satisfying \eqref{eq:n3} and let $\bar{S}_n$ be defined by 
%\eqref{eq:n2} with weights satisfying Assumption \ref{as_A}. Then the sequence of measures $(\mu_n)$ satisfies a 
%large deviation principle with speed $n$ and rate function $\chi^{*}$, i.e. we have
%\renewcommand{\labelenumi}{(\roman{enumi})}
%\begin{enumerate}
%\item for any closed set $F\subset \mathbb R$,
%\begin{equation*} \limsup \limits_{n \rightarrow \infty} \frac{1}{n}\log \mu_{n}(F) \leq - \inf \limits_{x \in F} 
%\chi^{*}(x), \label{UB}\end{equation*}
%\item for any open set $G \subset \mathbb R$,
%\begin{equation*} \liminf \limits_{n \rightarrow \infty} \frac{1}{n}\log \mu_{n}(G) \geq - 
%\inf \limits_{x \in G} \chi^{*}(x). \label{LB} \end{equation*}
%\end{enumerate}
%\end{thm}
%\begin{proof} See \cite{KieSta00}. 
%\end{proof}

\begin{rem}
{\em 
In fact, \cite{KieSta00} provides a more general result 
that considers an
infinite sum and refers to a general scale within the regularity
conditions (cf. Assumption \ref{as_A}), that is, they prove large
deviations for the family of weighted sums of the form
$A(\lambda):=\sum_{j=1}^{\infty} a_j(\lambda)X_j$,
where $\lambda \in I$ and either $I=\mathbb N$ or
$I=[0,\infty]$.} \end{rem}

Our goal will be to relax the finiteness assumption \eqref{eq:n3b} on
the moment generating function $M(\cdot)$.   

\section{Main Result} \label{sec22}

In order to present our large deviation result for weighted sums of stretched exponential random variables, we will use slightly different
assumptions on the weights from those used in \cite{KieSta00}. We will
restrict our considerations to non-negative weights.
As we show in  Lemma
\ref{Bisbetter} below,  in this case, our assumptions are weaker than those  used in
 \cite{KieSta00}.

\begin{assumptionletter}
         \label{as_B}
         \renewcommand{\theenumi}{(B.\arabic{enumi})}
         \renewcommand{\labelenumi}{\theenumi}
\begin{enumerate}
\item \label{as_B1} There exists a real number $s_1\neq 0$ 
such that the sequence $\{R(1,n)\}_{n \in \mathbb N}$ of real numbers defined
by 
\begin{equation*} \sum \limits_{j=1}^{n}
  a_j(n)  = s_1 R(1, n), \text{ for all } n \in \mathbb
  N,  \label{eq:f3} \end{equation*} satisfies $R(1,n) \rightarrow 1$ as $n \rightarrow \infty$.\\
\item \label{as_B2} 
There exists a real number $s$ such that for $a_{max}(n):=\max_{1 \leq
  j \leq n} a_j(n)$, 
\begin{equation} \lim \limits_{n \rightarrow \infty} n \cdot a_{max}(n) = s. \label{eq:f4} \end{equation} 
\end{enumerate}
\end{assumptionletter}
Examples for weight sequences that satisfy both Assumption \ref{as_A} and Assumption \ref{as_B} include Valiron means, see
\cite{KieSta00} as well as kernel functions (see Section
\ref{kernels}). 

%We study the tail behavior of $\bar{S}_n$ in the case when $(X_j)$ have stretched exponential tails, and hence fail to d
%have finite exponential moments. To this end, we 
%recall the definition of slowly varying functions and their most important properties (see \cite{9} and \cite{4}).

Recall that
a function $\ell: (0,  \infty) \rightarrow (0, \infty)$ is called \textbf{slowly varying} (at infinity) if for every $a >0$,
\begin{equation} \lim \limits_{x \rightarrow \infty} \frac{\ell(ax)}{\ell(x)}=1. \label{sv1} \end{equation}

We now state our main result. 

\begin{thm}[Large Deviations for Weighted Sums, Stretched Exponential Tails] \label{MR}
Let $\{X_j\}_{j \in \mathbb N}$ be a sequence of i.i.d. random variables on a probability space $(\Omega, \mathcal F, \mathbb P)$ with 
\begin{equation}\label{allmom}
\mathbb E[X_1^k]< \infty \quad \forall k \in \mathbb{N}, 
\end{equation}
and let $m:=\mathbb E[X_1]$.
%and $\sigma^2:=Var[X_1]$. 
Suppose that there exist a constant $r \in (0,1)$ and slowly varying functions $b$, $c_1$, $c_2:  (0, \infty) \rightarrow (0, \infty)$ and a constant $t^{*} >0$ such that for $t \geq t^{*}$,
\begin{equation} c_1(t) \exp\left(-b(t)t^r\right) \leq \mathbb{P}\left(X_1 \geq t\right) \leq c_2(t) \exp\left(-b(t)t^r\right). \label{eq:e1} \end{equation}
For every $n \in \mathbb N$, let $\{a_j(n)\}_{j \in \mathbb
  N}$ be a sequence of non-negative numbers that satisfy Assumption
\ref{as_B} with associated constants  $s_1, s \in \mathbb R$,
and let $\{\bar{S}_n\}_{n \in \mathbb N}$ be the sequence of weighted sums defined in
\eqref{eq:n2}.  Then 
\begin{equation}\lim \limits_{n \rightarrow \infty} \frac{1}{b(n)n^{r}} \log \mathbb{P} \left(\bar{S}_n \geq x\right) =
 -\left(\frac{x}{s}-\frac{s_1}{s}m\right)^r, \quad \forall x > s_1m.  \label{eq:e6}\end{equation}
\end{thm}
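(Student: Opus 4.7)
The target rate $\bigl(\tfrac{x-s_1 m}{s}\bigr)^r$ is consistent with a one-big-jump heuristic: the cheapest way to realise $\bar{S}_n \ge x$ is for exactly one summand $a_j(n) X_j$ to carry essentially the entire surplus $x - s_1 m$, ideally paired with the largest available weight $a_{\max}(n) \sim s/n$; this forces $X_j \approx n(x-s_1 m)/s$, which by \eqref{eq:e1} and the slow variation of $b$ occurs with probability of order $\exp\bigl(-b(n)\, n^r \bigl(\tfrac{x-s_1 m}{s}\bigr)^r\bigr)$. The plan is to establish matching upper and lower bounds on $\tfrac{1}{b(n) n^r}\log \mathbb P(\bar S_n \ge x)$ corresponding to this picture.

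\textbf{Lower bound.} Fix $\epsilon > 0$, pick $j^*(n)$ attaining $a_{\max}(n)$, and use independence to write
\[
\mathbb P(\bar S_n \ge x) \ge \mathbb P\!\left(X_{j^*} \ge \tfrac{x - s_1 m + \epsilon}{a_{\max}(n)}\right)\mathbb P\!\left(\sum_{j\ne j^*} a_j(n) X_j \ge s_1 m - \epsilon\right).
\]
For the second factor, apply Chebyshev's inequality: its expectation tends to $s_1 m$ (by Assumption \ref{as_B1} and $a_{\max}(n)\to 0$), and its variance is at most $\mathrm{Var}(X_1)\sum_j a_j(n)^2 \le \mathrm{Var}(X_1)\, a_{\max}(n)\, s_1 R(1,n) \to 0$, so this factor tends to $1$ and contributes $0$ to the limit. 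For the first factor, set $t_n := (x - s_1 m + \epsilon)/a_{\max}(n)$, so $t_n/n \to (x - s_1 m + \epsilon)/s > 0$; apply the lower tail bound in \eqref{eq:e1} and invoke slow variation to conclude $b(t_n)/b(n) \to 1$ and $\log c_1(t_n) = o(\log n) = o(b(n) n^r)$. This yields $\liminf \tfrac{1}{b(n) n^r}\log \mathbb P(\bar S_n \ge x) \ge - \bigl(\tfrac{x - s_1 m + \epsilon}{s}\bigr)^r$; letting $\epsilon \downarrow 0$ finishes the lower bound.

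\textbf{Upper bound.} Set $\tau_n := (x - s_1 m - \epsilon)/a_{\max}(n)$ and split
\[
\mathbb P(\bar S_n \ge x) \le \mathbb P\!\left(\max_{1 \le j \le n} X_j \ge \tau_n\right) + \mathbb P\!\left(\bar S_n \ge x,\ \max_{1 \le j \le n} X_j < \tau_n\right).
\]
The first term is at most $n\, c_2(\tau_n)\, e^{-b(\tau_n) \tau_n^r}$ by union bound and \eqref{eq:e1}; dividing its logarithm by $b(n) n^r$ gives limit $-\bigl(\tfrac{x - s_1 m - \epsilon}{s}\bigr)^r$ by slow variation, noting $\log n = o(b(n) n^r)$ since any slowly varying $b$ satisfies $b(n)\, n^{r'} \to \infty$ for $r' \in (0, r)$. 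For the second term, let $Y_j := X_j \mathbf{1}_{\{X_j < \tau_n\}}$ and apply exponential Markov: using the elementary inequality $e^{uy} \le 1 + uy + \tfrac{(uy)^2}{2}\, e^{u\tau_n}$ valid for all real $y \le \tau_n$ (checked separately for $y \le 0$ and $0 \le y \le \tau_n$), together with the moment hypothesis \eqref{allmom}, one obtains
\[
\mathbb P\!\left(\textstyle\sum_j a_j(n) Y_j \ge x\right) \le \exp\!\left(-t(x - s_1 m) + o(t) + C\, t^2 e^{t(x - s_1 m - \epsilon)}/n\right),
\]
where $\sum_j a_j(n) \to s_1$, $\sum_j a_j(n)^2 = O(1/n)$, and $a_{\max}(n)\tau_n = x - s_1 m - \epsilon$ have been used. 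Choosing $t = \alpha \log n$ with $\alpha \in (0, 1/(x - s_1 m - \epsilon))$ renders the right-hand side polynomially small, hence negligible on the $b(n) n^r$ scale. Letting $\epsilon \downarrow 0$ matches the lower bound.

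\textbf{Main obstacle.} The delicate step is the truncated Chernoff argument: $\tau_n$ and the Chernoff parameter $t = t_n$ must be tuned jointly so that the truncated-sum tail is polynomially small in $n$, being dominated by the single-spike contribution on the $b(n) n^r$ scale; the logarithmic gap between the two scales (polynomial vs.\ stretched-exponential) makes this work, but only because $r < 1$. A secondary but essential technical point is verifying that the slow variation of $b, c_1, c_2$ combined with the scale $t_n \sim n \cdot \mathrm{const}$ yields $b(t_n)/b(n) \to 1$ and $\log c_i(t_n) = o(b(n) n^r)$; this follows from standard Karamata-type estimates for slowly varying functions.
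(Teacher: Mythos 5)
Your lower bound and your treatment of the first term $\mathbb P(\max_{1\le j\le n}X_j\ge\tau_n)$ are correct and coincide with the paper's argument (one big jump at the maximal weight, plus a Chebyshev/$\mathbb L^2$ argument showing the remaining weighted sum concentrates). The proof breaks down at the truncated Chernoff step, and the error is not a technicality. With $t=\alpha\log n$ your bound on the truncated sum is of order $n^{-\alpha(x-s_1m)+o(1)}$, i.e.\ only polynomially small, and a polynomially small quantity is \emph{not} negligible on the $b(n)n^r$ scale --- it is the dominant term. Since $\log n=o(b(n)n^r)$, a bound $A_2^n\le n^{-c}$ only gives $\limsup_n (b(n)n^r)^{-1}\log A_2^n\le 0$, whereas you need this $\limsup$ to be at most $-\left(\frac{x}{s}-\frac{s_1}{s}m\right)^r<0$. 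In the decomposition $\mathbb P(\bar S_n\ge x)\le A_1^n+A_2^n$ the stretched-exponentially small term $A_1^n$ is swamped by $n^{-c}$, and the upper bound \eqref{upper-bound} does not follow. Your closing remark that the truncated-sum tail is ``dominated by the single-spike contribution on the $b(n)n^r$ scale'' because it is polynomially small has the domination exactly backwards.

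To repair this you must take the Chernoff parameter of order $b(n)n^r$, as the paper does ($\beta_\zeta(n)=\zeta n^r b(t_2(n))$ with $\zeta<\left(\frac{x}{s}-\frac{s_1}{s}m\right)^{r-1}$), so that the linear term $-t\,x/s$ already produces a stretched-exponential rate. But then your elementary remainder bound $e^{uy}\le 1+uy+\tfrac{1}{2}(uy)^2e^{u\tau_n}$ becomes useless: $t^2e^{t(x-s_1m-\epsilon)}/n$ grows like $\exp\{c\,b(n)n^r\}$ for a positive constant $c$ and overwhelms the main term. This is precisely the hard part of the theorem. The paper's Proposition \ref{remainder} handles it by (i) expanding the exponential to order $k+1$ with $k$ chosen so that $r<k/(k+1)$, so that the polynomial moment terms of order $i\ge 2$ vanish in the limit by \eqref{allmom} and (B.2); and (ii) controlling the genuinely exponential remainder $\mathbb E[(X_1^{(n)})^{k+1}e^{\beta(n)a_{\max}(n)X_1^{(n)}/s}]$ via H\"older's inequality and the integration-by-parts identity of Lemma \ref{inteq}, which trades the exponential factor against the actual tail $e^{-b(z)z^r}$ from \eqref{eq:e1}; the resulting integral $\int e^{\alpha z}\,\mathbb P(X_1\ge z)\,dz$ over $[t^*,t_2(n)]$ stays bounded exactly when $\zeta<\left(\frac{x}{s}-\frac{s_1}{s}m\right)^{r-1}$, and that constraint is where the constant in the rate comes from. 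Without some version of this step --- that is, without invoking the upper tail bound of \eqref{eq:e1} a second time, inside the truncated moment generating function --- the upper bound cannot be closed.
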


\begin{rem}
\label{rem-mt}
{\em The non-negativity  assumption on the weights could be relaxed
    only if one had more information about the lower tail of  the $\{X_j\}$,
    that is,  about the probabilites $\mathbb P(X_1 \leq -t)$ for $t  > 0$. Consider
    the following example: $a_j(n) = 1/n, j= 1, \ldots, \lfloor 2n/3 \rfloor$, $a_j(n)
    = - 1/n, j= \lfloor 2n/3 \rfloor + 1, \ldots , n$ (where, for $z
    \in \mathbb{R}$, 
    $\lfloor z \rfloor$ represents the greatest integer less than or
    equal to $z$). Then Assumption B is satisfied with $s_1 = 1/3$ and
    $s=1$. Take  i.i.d.\ random variables $\{X_j\}_{j \in \mathbb N}$
    with mean $m$ 
    that 
    satisfy \eqref{allmom} and \eqref{eq:e1}  and, in 
    addition, satisfy $\mathbb{P}(X_1
    \leq -t) = \exp ( -t^\alpha)$ for  some $\alpha$ with $0 <  \alpha  <
    r$, and $t$ large enough. Then,  
for every $x >m/3$, it can be shown that }
%there exists a constant $\gamma_x = (x - m/3)^\alpha > 0$ such that 
\begin{equation} \label{differdecay}
\lim \limits_{n \rightarrow \infty} \frac{1}{n^{\alpha}} \log \mathbb{P} \left(\bar{S}_n \geq x\right) =
 - \left( x  - \frac{m}{3} \right)^\alpha. 
\end{equation}
{\em Indeed, to show  \eqref{differdecay},  for any $\varepsilon > 0$, first write
 \[ \mathbb{P} (\bar{S}_n \geq x) \leq
 \mathbb{P}\left(\sum_{i=1}^{\lfloor 2n/3\rfloor} X_i \geq 2n(m+ \varepsilon)/3\right)
+\mathbb{P}\left(\sum_{i=\lfloor 2n/3 \rfloor+ 1}^n  (-X_i)  \geq n(x-
2(m+ \varepsilon)/3)\right). \]
Then, applying Theorem \ref{MR} twice, first to $\{X_j\}_{j \in
  \mathbb N}$ and then to $\{-X_j\}_{j \in \mathbb N}$, both times
with $a_j(n) = 1/n, j \in \mathbb N,$ and recalling that $\alpha < r$,
we  infer that  as $n \rightarrow \infty$, 
$n^{-\alpha} \ln \mathbb{P}(\sum_{i=1}^{\lfloor 2n/3\rfloor} X_i \geq 2n(m+
\varepsilon)/3) = -\infty$ 
%is asymptotically negligible with respect to 
%$n^{-\alpha} \ln \mathbb{P}(\sum_{i=\lfloor 2n/3 \rfloor+ 1}^n  (-X_i)  \geq n(x-
%2((m+ \varepsilon)/3))$ 
%(because $\alpha < r$),
 and hence, 
\begin{eqnarray*}
 \limsup_{n \rightarrow \infty}  \frac{1}{n^\alpha} \log \mathbb{P} \left(
\bar{S}_n \geq x\right)  & \leq & \lim_{n \rightarrow \infty}  \frac{1}{n^\alpha} \log \mathbb{P} \left(
\sum_{i=\lfloor 2n/3 \rfloor+ 1}^n  (-X_i) 
\geq 
n(x-2((m+\varepsilon)/3)\right) \\
& = & - \left( x- \frac{m-2\varepsilon }{3} \right)^\alpha. 
\end{eqnarray*}
Sending $\varepsilon \to 0$, we see that \eqref{differdecay}
holds  with $\leq$ instead of equality.  To show the opposite inequality in \eqref{differdecay}, write 
\[  \mathbb{P} (\bar{S}_n \geq x) \geq \mathbb{P} \left(
  \sum_{i=1}^{\lfloor 2n/3\rfloor} X_i  \geq
  n(2m/3 - \varepsilon)\right) \cdot \mathbb{P}\left(\sum_{i=\lfloor
    2n/3 \rfloor+ 1}^n (-X_i) \geq n(x- 2m/3 + \varepsilon \right). \]
The first probability on the right-hand side goes to $1$ due to the law of large
numbers.  Once again, applying Theorem \ref{MR} to $\{-X_j\}_{j \in
  \mathbb N}$  with $a_j(n) = 1/n, j \in \mathbb N$, for the second
term on the right-hand side, and then letting $\varepsilon \to 0$, we obtain \eqref{differdecay}
with $\geq$ instead of equality. 
Together,  both inequalities  prove \eqref{differdecay}.
However, we cannot recover $\alpha$ from the assumptions in Theorem \ref{MR}.}
\end{rem}

\begin{rem}{\em For the same reason as in the last remark, namely
    that the only assumption on the lower tail of  $\{X_j\}_{j \in
      \mathbb{N}}$ is \eqref{allmom}, we cannot strenghten
    \eqref{eq:e1} to a large deviation principle without imposing
    further assumptions. For  $x < s_1m$, the
    decay of $\mathbb{P}(\bar{S}_n \leq x)$ is determined by the lower
    tail of the $\{X_j\}$.  For example, if the $\{X_j\}_{j \in \mathbb{N}}$ are bounded below, Cram\'er's
Theorem implies that
$\mathbb{P}(\bar{S}_n \leq x)$ decays exponentially in $n$. If, on the 
other hand, $\mathbb{P}(X_1 \leq -t) = \exp (-t^\alpha)$ with  $0 < \alpha < r$,
then as in Remark \ref{rem-mt} we can show 
$-\infty < \lim_{ n \to \infty} n^{-\alpha} \log \mathbb{P}(\bar{S}_n \leq x) < 0$.}
\end{rem}

Stretched exponential distributions have been proposed as a complement  
 to the frequently used power law distributions to model many
 naturally occurring heavy-tailed distributions.   
Any distribution that satisfies \eqref{eq:e1} and is bounded below 
also satisfies \eqref{allmom}.  A concrete example is the 
Weibull distribution with shape parameter lying in the interval
$(0,1)$. 
Before proceeding to the proof of Theorem \ref{MR}, let us comment on
the relationship between Assumptions A and B. In fact, for a non-negative sequence of weights, Assumption \ref{as_B} is weaker than Assumption \ref{as_A}. see Lemma \ref{Bisbetter}. To see that it is strictly weaker, consider the sequence of weights defined by $a_j(n)=n^{-1}+n^{-(1+\varepsilon)}$, $j=1,...,n$, for some $\varepsilon \in (0, \frac{1}{2})$, for which it is easy to show that Assumption \ref{as_B} holds, but \ref{as_A2} cannot be satisfied.

\begin{lem}[Relationship between Assumptions \ref{as_A} and \ref{as_B}]\label{Bisbetter}
For every $n \in \mathbb N$, let $\{a_j(n)\}_{j \in \mathbb N}$ be a
sequence of non-negative real numbers that satisfy Assumption \ref{as_A}. Then Assumption \ref{as_B} holds.
\end{lem}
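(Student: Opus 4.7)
The plan is to derive Assumption \ref{as_B} directly from Assumption \ref{as_A}, taking the constant $s_1$ in (B.1) to be the one from (A.1) and taking the constant $s$ in (B.2) to be $s := \lim_{\nu \to \infty} |s_\nu|^{1/\nu}$, which exists by (A.1). Note first that since the weights are non-negative, $\sum_{j=1}^n a_j(n)^\nu \geq 0$ for every $n, \nu$, and combined with $R(\nu,n) \to 1$ and $s_\nu \neq 0$, this forces $s_\nu > 0$ for every $\nu \in \mathbb{N}$, so $|s_\nu| = s_\nu$. Part (B.1) is then immediate: setting $\nu = 1$ in \eqref{eq:e3} gives exactly the defining relation of (B.1) with the same $s_1$ and with $R(1,n) \to 1$ by (A.1). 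The content of the lemma is therefore (B.2).

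For (B.2) I would sandwich $n \cdot a_{max}(n)$ using the elementary inequalities
\[
a_{max}(n)^\nu \;\leq\; \sum_{j=1}^n a_j(n)^\nu \;\leq\; n \cdot a_{max}(n)^\nu,
\]
valid because $a_j(n) \geq 0$. Combined with \eqref{eq:e3}, these give, for every $\nu \in \mathbb{N}$,
\[
\bigl(s_\nu R(\nu,n)\bigr)^{1/\nu} \;\leq\; n \cdot a_{max}(n) \;\leq\; \bigl(s_\nu R(\nu,n)\bigr)^{1/\nu} \cdot n^{1/\nu}.
\]
For the lower bound I fix $\nu$ and let $n \to \infty$: since $R(\nu,n) \to 1$, this yields $\liminf_n n \cdot a_{max}(n) \geq s_\nu^{1/\nu}$, and then sending $\nu \to \infty$ gives $\liminf_n n \cdot a_{max}(n) \geq s$.

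For the matching upper bound I have to let $\nu = \nu(n)$ grow together with $n$, choosing $\nu(n) \to \infty$ fast enough that $(\log n)/\nu(n) \to 0$ (for instance $\nu(n) = \lceil (\log n)^2 \rceil$); this handles $n^{1/\nu(n)} \to 1$, while $s_{\nu(n)}^{1/\nu(n)} \to s$ is automatic from (A.1). The main obstacle is to verify that $R(\nu(n), n)^{1/\nu(n)} \to 1$ along such a sequence, because (A.2) only bounds $|R(\nu,n)-1|$ by $r_\nu(1+\delta_n)^\nu/n$, which is \emph{not} small when $\nu = \nu(n) \to \infty$. Here I would combine $R(\nu,n) \geq 0$ (from non-negativity of the weights) with the upper bound $R(\nu,n) \leq 1 + r_\nu(1+\delta_n)^\nu/n$ and estimate
\[
\frac{\log R(\nu(n),n)}{\nu(n)} \;\leq\; \frac{1}{\nu(n)}\log\!\Bigl(1 + r_{\nu(n)}(1+\delta_n)^{\nu(n)}/n\Bigr),
\]
splitting into the cases where the argument of the logarithm is at most $2$ or greater. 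Using $\limsup_\nu r_\nu^{1/\nu} \leq 1$ (from (A.2)), $\delta_n \to 0$, and $(\log n)/\nu(n) \to 0$, each of the contributions $\log r_{\nu(n)}/\nu(n)$, $\log(1+\delta_n)$, and $-\log n/\nu(n)$ has non-positive limsup, and the displayed right-hand side tends to $0$. Therefore $\limsup_n R(\nu(n),n)^{1/\nu(n)} \leq 1$. Inserting this into the sandwich gives $\limsup_n n \cdot a_{max}(n) \leq s$, which combined with the liminf bound above yields $\lim_n n \cdot a_{max}(n) = s$ and completes the proof.
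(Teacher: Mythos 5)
Your proposal is correct and follows essentially the same route as the paper: both proofs reduce (B.1) to the $\nu=1$ case of (A.1) and establish (B.2) by sandwiching $n\,a_{max}(n)$ between $\bigl(s_\nu R(\nu,n)\bigr)^{1/\nu}$ and $\bigl(s_\nu R(\nu,n)\bigr)^{1/\nu} n^{1/\nu}$, taking iterated limits ($n\to\infty$, then $\nu\to\infty$) for the lower bound and coupling $\nu$ to $n$ for the upper bound. The only (immaterial) difference is in the coupling: the paper sets $\nu=n$ and absorbs the error term via the envelopes $r_\nu\le(1+\varepsilon)^\nu$, $\delta_n\le\varepsilon$ before sending $\varepsilon\downarrow 0$, whereas you take $\nu(n)=\lceil(\log n)^2\rceil$ and control $R(\nu(n),n)^{1/\nu(n)}$ by a direct logarithmic estimate; both are valid.
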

\begin{proof} Given weights $\{a_j(n)\}_{j \in \mathbb N}$ that
  satisfy Assumption \ref{as_A},  clearly \ref{as_B1} follows immediately from
  \ref{as_A1}. 
It only remains to show \ref{as_B2}.  First, note that by Assumption \ref{as_A2}, $R(\nu, n)$ satisfies the inequality
\begin{equation} 1 - r_\nu\frac{(1+\delta_n)^\nu}{n} \leq R(\nu, n) \leq 1 + r_\nu\frac{(1+\delta_n)^\nu}{n}.   \label{inequalRR} \end{equation}
Moreover, for any $\varepsilon > 0$, we can find $\nu^*
(\varepsilon) \in \mathbb N$ and  $n^* (\varepsilon) \in \mathbb N$ such that  
\begin{equation} 0 \leq r_\nu \leq (1+\varepsilon)^\nu, \quad  \forall
  \nu \geq \nu^*(\varepsilon), \quad \mbox{ and } \quad 0 \leq \delta_n \leq
  \varepsilon, \quad \forall  n \geq n^* (\varepsilon ).\label{inequr} \end{equation}
%Furthermore, there exists $n^* \in \mathbb N$ such that 
%\begin{equation} 0 \leq \delta_n \leq \varepsilon, \quad \forall  n\geq n^*.  \label{inequdelta} \end{equation}
By using the inequality $a_{max}(n)^{\nu} \leq \sum \limits_{j=1}^{ n
}a_j(n)^{\nu}$, \ref{as_A1} and \ref{as_A2} we see that for $\nu, n
\in \mathbb N$, 
\[ na_{max}(n) \leq n \left( \sum \limits_{j=1}^n
  a_j(n)^\nu\right)^\frac{1}{\nu} =  n (s_\nu R(\nu,
n))^\frac{1}{\nu}\cdot (n^{1-\nu})^\frac{1}{\nu}\leq
n^\frac{1}{\nu}(s_\nu)^\frac{1}{\nu} \left(  1 +
  r_\nu\frac{(1+\delta_n)^\nu}{n} \right)^\frac{1}{\nu}. 
\]
Together with \eqref{inequr}, this implies that
for $\varepsilon  > 0$, and $\nu \geq \nu^*(\varepsilon)$, $n \geq n^*(\varepsilon)$,  
\[ na_{max}(n) \leq (s_\nu)^\frac{1}{\nu} \left(n(1+\varepsilon)^{2\nu} + (1+\varepsilon)^{2\nu}\right)^{\frac{1}{\nu}} 
= (n+1)^\frac{1}{\nu}(s_\nu)^\frac{1}{\nu}(1+\varepsilon)^{2}.
\]
%\begin{align*} na_{max}(n) 
% &= n (a_{max}(n)^\nu)^{\frac{1}{\nu}} \\
%& \leq n \left( \sum \limits_{j=1}^n a_j(n)^\nu\right)^\frac{1}{\nu} \\
%&= n (s_\nu R(\nu, n))^\frac{1}{\nu}\cdot (n^{1-\nu})^\frac{1}{\nu} \\
%&\leq n^\frac{1}{\nu}(s_\nu)^\frac{1}{\nu} \left(  1 + r_\nu\frac{(1+\delta_n)^\nu}{n} \right)^\frac{1}{\nu}\\
%&= (s_\nu)^\frac{1}{\nu} \left(n + r_\nu(1+ \delta_n)^\nu\right)^{\frac{1}{\nu}}\\
%&\leq (s_\nu)^\frac{1}{\nu} \left(n(1+\varepsilon)^{2\nu} + (1+\varepsilon)^{2\nu}\right)^{\frac{1}{\nu}} \\
%&\leq (n+1)^\frac{1}{\nu}(s_\nu)^\frac{1}{\nu}(1+\varepsilon)^{2}.
%\end{align*} 
Setting $\nu=n$,  for $n\geq \max\{\nu^*(\varepsilon), n^*(\varepsilon)\}$, we
have 
\begin{equation*} na_{max}(n) \leq \sqrt[n]{n+1}\sqrt[n]{s_n}(1+\varepsilon)^2.
\end{equation*}
Since $s = \lim_{n \rightarrow \infty} \sqrt[n]{s_n}$ by
(A.1), taking  first the limit superior as $n \rightarrow \infty$  and
then as $\varepsilon \downarrow 0$, we see that 
% Hence, for any $\varepsilon > 0$,
\begin{equation} \limsup \limits_{n \rightarrow \infty}na_{max}(n)
  \leq  \lim_{ \varepsilon \downarrow 0} s(1+ \varepsilon)^2 =
  s.  \label{eq:up} \end{equation}

Next, for the lower bound for $na_{max}(n)$, 
we will make use of the fact that 
%\begin{equation*} 
$( n  a_{max}(n))^{\nu} \geq   n ^{\nu-1} \sum_{j=1}^{ n } a_j(n) ^{\nu}.$
 %\end{equation*}
Indeed, then for $\varepsilon > 0$,  by  \eqref{eq:e3},
\eqref{eq:e4} and \eqref{inequr},  for $\nu \geq
\nu^*(\varepsilon)$ and $n\geq n^*(\varepsilon)$, we have 
\[  na_{max}(n) \geq \left(s_{\nu}R(\nu, n)\right)^{\frac{1}{\nu}} 
\geq (s_\nu)^{\frac{1}{\nu}}\left(1-r_\nu\frac{(1+\delta_n)^\nu}{n}\right)^\frac{1}{\nu} 
\geq
(s_\nu)^{\frac{1}{\nu}}\left(1-\frac{(1+\varepsilon)^{2\nu}}{n}\right)^\frac{1}{\nu}. 
\]
%\begin{align*} na_{max}(n) &\geq \left(s_{\nu}R(\nu, n)\right)^{\frac{1}{\nu}} \\
%& \geq (s_\nu)^{\frac{1}{\nu}}\left(1-r_\nu\frac{(1+\delta_n)^\nu}{n}\right)^\frac{1}{\nu} \\
%&\geq (s_\nu)^{\frac{1}{\nu}}\left(1-\frac{(1+\varepsilon)^{2\nu}}{n}\right)^\frac{1}{\nu}  \\
%& =
%(s_\nu)^{\frac{1}{\nu}}\left(\left(1-\frac{(1+\varepsilon)^{2\nu}}{n}\right)^n\right)^\frac{1}{n\nu}.  \end{align*}
Taking limits as $n \rightarrow \infty$ and noting that 
$(1-\frac{(1+\varepsilon)^{2\nu}}{n})^n \sim \exp\{
  -(1+\varepsilon)^{2\nu }\}$ and $n\nu \rightarrow \infty$ as $n
\rightarrow \infty$, we obtain 
\begin{align*} \liminf \limits_{n \rightarrow \infty} na_{max}(n) \geq
(s_\nu)^{\frac{1}{\nu}}\liminf \limits_{n \rightarrow
  \infty}\left(\left(1-\frac{(1+\varepsilon)^{2\nu}}{n}\right)^n\right)^\frac{1}{n\nu}
\geq   (s_\nu)^\frac{1}{\nu}, 
\quad  \forall \nu \geq \nu^*(\varepsilon). \end{align*}
Sending $\nu \rightarrow \infty$ and recalling 
from (A.1) that $s = \lim_{\nu \rightarrow \infty} \sqrt[\nu]{s_\nu}$, we conclude that
\begin{equation}  \liminf \limits_{n \rightarrow \infty}na_{max}(n)  \geq s.
\label{eq:down} \end{equation}
Combining \eqref{eq:up} and \eqref{eq:down}, we see that the weights 
$\{a_j\}_{j \in \mathbb N}$ satisfy  (B.2), and thus Assumption B.  
\end{proof}

\section{Proof of Theorem \ref{MR}} \label{sec23}

We will prove a slightly stronger statement than Theorem \ref{MR},
namely we show in Section \ref{sec232}
that if
the first inequality in \eqref{eq:e1} is satisfied, then the lower bound 
\begin{equation}
\label{lower-bound}
\liminf_{n \to \infty}\frac{1}{b\left(n\right)n^r} \log \mathbb{P} \left(\bar{S}_n \geq x\right) \geq 
 -\left(\frac{x}{s}-\frac{s_1}{s}m\right)^r, \quad \forall x > s_1 m, 
\end{equation} 
holds; and  in Section \ref{sec233} we show that  the second
inequality in \eqref{eq:e1} implies the
upper bound 
\begin{equation}
\label{upper-bound}
\limsup_{n \to \infty}\frac{1}{b\left(n\right)n^r} \log \mathbb{P} \left(\bar{S}_n \geq x\right) \leq 
 -\left(\frac{x}{s}-\frac{s_1}{s}m\right)^r,  \quad \forall x > s_1 m.
\end{equation}
 First, in  Section
\ref{subs-prelim}, 
we summarize some relevant properties of slowly varying
functions. 
Throughout the section, the notation $f(x) \sim g(x)$ as $x
\rightarrow \infty$ for two functions $f, g: \mathbb R \rightarrow
\mathbb R$ means that $\lim \limits_{x \rightarrow \infty}
f(x)/g(x)=1$. Also, given a set $A$, $\ind_A$ will denote the
indicator function of $A$, which equals $1$ on $A$ and $0$ on the
complement.

\subsection{Properties of Slowly Varying Functions}
\label{subs-prelim}

We will need the following preliminaries on slowly varying
functions. Proposition 3 corresponds to Proposition 1.3.6 in \cite{9},
where Lemma 4 refers to (1.4) in \cite{4}. 

\begin{prop}[Properties of Slowly Varying Functions] \label{props}
Let $\ell: (0,  \infty) \rightarrow (0, \infty)$ be a slowly varying function (at infinity). Then \renewcommand{\labelenumi}{(\roman{enumi})}
\begin{enumerate}
\item $\lim \limits_{x \rightarrow \infty} \displaystyle \frac{\log \ell(x)}{\log x}=0$.
\item For any $\alpha \in \mathbb R$, the function $f(x) =
  \ell(x)^\alpha, x \in \mathbb R$, is slowly varying.
\item For any $\alpha > 0$, $x^\alpha l(x) \rightarrow \infty$ and $x^{-\alpha}l(x) \rightarrow 0$ as $x \rightarrow \infty$. 
\end{enumerate}
Furthermore, if $m: (0,  \infty) \rightarrow (0, \infty)$ is another slowly varying function then
\begin{enumerate} \setcounter{enumi}{3}
\item the functions $f(x)= \ell(x)m(x)$ and $g(x) = \ell(x)+m(x)$, $x \in
  \mathbb R$,  are slowly varying. 
\item if $m(x) \rightarrow \infty$ as $x \rightarrow \infty$, then the
  function $f(x) = \ell(m(x)), x \in \mathbb R,$ is slowly varying.
\end{enumerate}
\end{prop}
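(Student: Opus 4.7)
The plan is to prove each of the five properties directly from the defining relation \eqref{sv1}, with the understanding that (i) and (iii) will require the representation theorem for slowly varying functions (Karamata), while (ii), (iv), and (v) follow by elementary manipulations of limits.

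For the easy parts, I would first dispatch (ii) and (iv): in (ii), $\ell(ax)^\alpha/\ell(x)^\alpha = (\ell(ax)/\ell(x))^\alpha \to 1^\alpha = 1$ by continuity of $y \mapsto y^\alpha$ at $1$; in (iv), the product is handled by $\ell(ax)m(ax)/(\ell(x)m(x)) \to 1 \cdot 1 = 1$, while for the sum one writes $(\ell(ax)+m(ax))/(\ell(x)+m(x))$ and bounds it between $\min(\ell(ax)/\ell(x), m(ax)/m(x))$ and the corresponding $\max$. For (v), since $m(x)\to\infty$, for any $a>0$ the slowly varying function $\ell$ evaluated at $m(ax)$ and $m(x)$ satisfies $\ell(m(ax))/\ell(m(x)) = \ell(m(ax))/\ell(m(x))$; the subtlety is that the ratio $m(ax)/m(x)\to 1$, so one needs the uniform convergence theorem for slowly varying functions, which asserts that $\ell(ty)/\ell(t)\to 1$ uniformly for $y$ in any compact subset of $(0,\infty)$, in order to conclude $\ell(m(ax))/\ell(m(x))\to 1$.

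For the harder parts (i) and (iii), I would invoke Karamata's representation theorem: any slowly varying $\ell$ admits a representation
\begin{equation*}
\ell(x) = c(x)\exp\!\left(\int_{a}^{x}\frac{\varepsilon(t)}{t}\,dt\right),
\end{equation*}
where $c(x)\to c\in(0,\infty)$ and $\varepsilon(t)\to 0$ as $t\to\infty$. Property (iii) follows directly: for $\alpha>0$, fix $x_0$ so that $|\varepsilon(t)|<\alpha/2$ for $t\geq x_0$; then the integral is bounded by $(\alpha/2)\log(x/x_0)+O(1)$, which yields $x^{-\alpha/2}\leq \ell(x)\leq x^{\alpha/2}$ for all large $x$. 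Multiplying by $x^{\pm\alpha}$ then gives the two limits in (iii). Property (i) is an immediate corollary: taking logs in the bound $x^{-\alpha/2}\leq \ell(x)\leq x^{\alpha/2}$ yields $|\log\ell(x)/\log x|\leq\alpha/2$ for large $x$, and since $\alpha>0$ is arbitrary, (i) follows.

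The main obstacle is the uniform convergence theorem used in (v) and the representation theorem used for (i) and (iii); both are nontrivial classical facts. Since the paper explicitly identifies this proposition as Proposition 1.3.6 of the Bingham--Goldie--Teugels reference, the most economical route is simply to cite that reference rather than reproduce these classical arguments in full, so my proof proposal would be to state the brief elementary verifications of (ii) and (iv) inline and refer the reader to \cite{9} for the representation-theoretic facts underlying (i), (iii), and (v).
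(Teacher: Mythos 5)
Your proposal is correct and ends up in essentially the same place as the paper, which offers no proof of this proposition at all and simply cites Proposition~1.3.6 of Bingham--Goldie--Teugels. The elementary verifications you sketch for (ii) and (iv), the mediant-inequality squeeze for the sum, the use of the uniform convergence theorem for (v), and the Karamata-representation bounds for (i) and (iii) are all sound, so your version is, if anything, more self-contained than the paper's.
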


\begin{lem}[Representation for Slowly Varying Functions] \label{rsv2}
A function $\ell: (0, \infty) \rightarrow (0,\infty)$ is slowly
varying if and only if there exist $a>0$, $\bar{\eta} \in \mathbb{R}$
and bounded measurable functions $\eta(\cdot)$ and $\varepsilon
(\cdot)$ with $\eta(x) \rightarrow \bar{\eta}$, $\varepsilon (x) \rightarrow 0$ as $x \rightarrow \infty$ such that, for $x \geq a$, $\ell$ can be written in the form 
\begin{equation} \ell(x)=\exp \left\{\eta(x)+\int \limits_{a}^{x}
    \frac{\varepsilon(u)}{u}du \right\}. \label{rsv} \end{equation}
\end{lem}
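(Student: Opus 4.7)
The statement is Karamata's classical representation theorem for slowly varying functions, so I would follow the usual two-direction plan.

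\textbf{Sufficiency (the easy direction).} Assume $\ell$ has the given representation. For fixed $a>0$ and $x$ large enough that $ax \geq a$,
\[
\frac{\ell(ax)}{\ell(x)} = \exp\!\left\{\eta(ax) - \eta(x) + \int_x^{ax}\frac{\varepsilon(u)}{u}\,du\right\}.
\]
Since $\eta(ax), \eta(x) \to \bar{\eta}$, the difference $\eta(ax)-\eta(x) \to 0$. For the integral, bound by $\sup_{u\geq x}|\varepsilon(u)| \cdot |\log a|$, which tends to $0$. Hence $\ell(ax)/\ell(x) \to 1$, so $\ell$ is slowly varying.

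\textbf{Necessity (the hard direction).} The plan has three stages. First, I would invoke (or prove) the uniform convergence theorem: if $\ell$ is slowly varying, then $\ell(\lambda x)/\ell(x) \to 1$ uniformly for $\lambda$ in compact subsets of $(0,\infty)$. The standard argument passes to $h(t) := \log \ell(e^t)$, notes that $h(t+s)-h(t) \to 0$ pointwise in $s$, and upgrades this to locally uniform convergence via a Baire category / Steinhaus-type argument on the sets where convergence is quickly achieved.

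Second, I would produce a smooth companion to $\ell$. Define
\[
\tilde{\ell}(x) := \int_0^1 \ell(e^s x)\, ds, \qquad x > 0.
\]
A change of variables shows $\tilde{\ell}(x) = \int_x^{ex} \ell(t)/t\, dt$, so $\tilde{\ell}$ is absolutely continuous (in fact differentiable a.e.) with
\[
\tilde{\ell}'(x) = \frac{\ell(ex) - \ell(x)}{x}.
\]
The uniform convergence theorem yields $\ell(ex)/\ell(x) \to 1$, so $\tilde{\ell}(x)/\ell(x) \to 1$ and in particular $\tilde{\ell}$ is slowly varying. Moreover
\[
\varepsilon(x) := \frac{x\tilde{\ell}'(x)}{\tilde{\ell}(x)} = \frac{\ell(ex)/\ell(x) - 1}{\tilde{\ell}(x)/\ell(x)} \longrightarrow 0.
\]

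Third, I assemble the representation. Pick $a>0$ so large that $\tilde{\ell}$ is positive, absolutely continuous, and nonvanishing on $[a,\infty)$, and that the quantities above are well-defined. For $x \geq a$,
\[
\log \tilde{\ell}(x) = \log \tilde{\ell}(a) + \int_a^x \frac{\tilde{\ell}'(u)}{\tilde{\ell}(u)}\, du = \log \tilde{\ell}(a) + \int_a^x \frac{\varepsilon(u)}{u}\, du.
\]
Define
\[
\eta(x) := \log \tilde{\ell}(a) + \log\!\left(\frac{\ell(x)}{\tilde{\ell}(x)}\right).
\]
Then $\ell(x) = \exp\{\eta(x) + \int_a^x \varepsilon(u)/u\, du\}$, with $\varepsilon(x) \to 0$ (as shown) and $\eta(x) \to \bar{\eta} := \log\tilde{\ell}(a)$ since $\ell(x)/\tilde{\ell}(x) \to 1$. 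Boundedness of $\eta$ and $\varepsilon$ (possibly after enlarging $a$) follows from the respective limits.

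\textbf{Main obstacle.} The crux is the uniform convergence theorem, which is not immediate from the pointwise definition \eqref{sv1}; without it the construction of $\tilde{\ell}$ and the limit $\varepsilon(x) \to 0$ collapse. Everything else is essentially bookkeeping once uniform convergence and the smoothing step are in place. Since the paper cites this representation from reference [4], in practice I would state the uniform convergence theorem as a black box and devote the exposition to the smoothing construction and the identification of $\eta$ and $\varepsilon$.
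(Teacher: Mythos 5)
Your proof is correct, but there is nothing in the paper to compare it against: the authors do not prove Lemma \ref{rsv2} at all, they simply cite it as the classical Karamata representation theorem (attributing it to (1.4) of Galambos--Seneta, reference [4]). What you have written is the standard textbook proof (cf.\ Bingham--Goldie--Teugels, Theorem 1.3.1): the easy direction by direct estimation of $\eta(\lambda x)-\eta(x)$ and of the integral $\int_x^{\lambda x}\varepsilon(u)u^{-1}\,du$, and the hard direction via the uniform convergence theorem, the logarithmic average $\tilde{\ell}(x)=\int_x^{ex}\ell(t)t^{-1}\,dt$, and the identification $\varepsilon(x)=x\tilde{\ell}'(x)/\tilde{\ell}(x)$, $\eta(x)=\log\tilde{\ell}(a)+\log(\ell(x)/\tilde{\ell}(x))$. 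All the computations check out, and you correctly identify the uniform convergence theorem as the one genuinely nontrivial ingredient. Two minor points: in the sufficiency step you reuse the letter $a$ both for the scaling factor and for the lower limit of integration, and for scaling factors $a<1$ the supremum bounding the integral should run over $u\geq ax$ rather than $u\geq x$ (harmless, since $ax\to\infty$ as well). More substantively, the necessity direction requires $\ell$ to be measurable --- the Baire/Steinhaus step in the uniform convergence theorem fails otherwise, and non-measurable counterexamples exist --- a hypothesis the paper's definition \eqref{sv1} omits; this is a defect of the statement as printed rather than of your argument, but it is worth flagging if you intend to supply the proof the paper leaves out.
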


As a direct consequence of Lemma \ref{rsv2}, we have the following
result. 

\begin{lem}\label{gsv}
Let $\ell: (0, \infty) \rightarrow (0,\infty)$ be a slowly varying
function and let $g: (0,\infty) \rightarrow (0, \infty)$ be another function such that $g(x) \rightarrow c$ for some $c \in (0, \infty)$ as $x \rightarrow \infty$. Then 
we have
\begin{equation} \lim \limits_{x \rightarrow \infty} \frac{\ell\left(g(x)x\right)}{\ell(x)}=1. \label{gsv2} \end{equation}
\end{lem}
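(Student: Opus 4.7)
The plan is to obtain this by a direct application of the Karamata-type representation in Lemma \ref{rsv2}, evaluating the ratio $\ell(g(x)x)/\ell(x)$ via the explicit exponential formula. Since $g(x)\to c>0$, for all sufficiently large $x$ we have $g(x)x\to\infty$, so both $x$ and $g(x)x$ eventually exceed the threshold $a$ of the representation, and we may write
\[
\log\frac{\ell(g(x)x)}{\ell(x)} = \bigl[\eta(g(x)x) - \eta(x)\bigr] + \int_{x}^{g(x)x} \frac{\varepsilon(u)}{u}\,du.
\]
The bracketed term tends to $\bar\eta-\bar\eta=0$ since $\eta(y)\to\bar\eta$ as $y\to\infty$ and $g(x)x\to\infty$, so it suffices to show the integral tends to $0$.

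The natural move for the integral is the substitution $u=xv$, giving $\int_{1}^{g(x)} \varepsilon(xv)/v\,dv$. For $x$ large enough, $g(x)\in[c/2,2c]$, so the range of integration lies in the fixed compact interval $J=[\min(1,c/2),\,\max(1,2c)]$, which is bounded away from $0$. On $J$ we have $xv \geq x\min(1,c/2) \to\infty$ uniformly in $v$, so $\varepsilon(xv)\to 0$ uniformly in $v\in J$; combined with $1/v$ being bounded on $J$ and $|g(x)-1|$ bounded, the integral vanishes in the limit. Exponentiating gives \eqref{gsv2}.

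The argument is essentially routine once Lemma \ref{rsv2} is in hand; the only mildly delicate point is ensuring uniform smallness of $\varepsilon(xv)$ over the $v$-interval, which is handled by the positivity of $c$ (so that $v$ stays bounded away from $0$ and $xv\to\infty$ uniformly). There is no serious obstacle, and the proof can be written in just a few lines.
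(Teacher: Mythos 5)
Your proof is correct and is exactly the derivation the paper intends: the paper gives no written proof, merely asserting the lemma is a direct consequence of the representation in Lemma \ref{rsv2}, and your argument (vanishing of the $\eta$-difference plus the substitution $u=xv$ with uniform smallness of $\varepsilon$ on a compact $v$-interval bounded away from $0$) is the standard way to fill in that assertion. No gaps.
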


\subsection{The Lower Bound} \label{sec232}

For $n \in \mathbb N$, let 
 $j^{*}(n):=\inf\{1\leq j \leq n: a_j(n)=a_{max}(n)\}$. 
For any fixed $\varepsilon > 0$,  since the $\{X_j\}_{j  \in
  \mathbb N}$ are i.i.d.,  
\begin{align*}
\mathbb{P}(\bar{S}_n\geq x) &= \mathbb{P}\left(\sum \limits_{j=1}^{ n } a_j(n)(X_j-m) \geq  x-\sum \limits_{j=1}^{ n }a_j(n)m\right) \\ &\geq 
\mathbb{P}\left(a_{\rm max}(n)(X_{j^*(n)}-m) \geq x-\sum \limits_{j=1}^{ n } a_j(n)m +\varepsilon, \sum \limits_{j\in \{1, \ldots ,n\}, j \neq j^*(n)}
 a_j(n)(X_j-m) \geq -  \varepsilon\right) \\
&=  \mathbb{P}\left(X_1 \geq t_1(n)\right)\mathbb{P}
\left(\sum\limits_{j\in \{1, \ldots ,n\}, j \neq j^*(n)} a_j(n)(X_j-m) \geq -
\varepsilon\right), 
\end{align*}
where $t_1(n) = t_1^\varepsilon (n)$ is defined by 
\begin{equation}
\label{def-t1n}
t_1(n) := \frac{1}{na_{\rm max} (n)}\left[n \left(x-\sum \limits_{j=1}^{ n
    }a_j(n)m + a_{\rm max} (n)m + \varepsilon\right) \right], \quad n \in
  \mathbb N. 
\end{equation}
Applying the lower bound of \eqref{eq:e1} with 
$t = t_1(n)$, we obtain 
\begin{equation}
\label{temp}
 \mathbb P\left(\bar{S}_n \geq x \right) \geq
  c_1\left(t_1(n)\right) \exp\left\{-b\left(t_1(n)\right)(t_1(n))^r\right\}\cdot 
\mathbb{P}\left( \sum\limits_{j\in \{1, \ldots ,n\}, j \neq j^*(n)}
  a_j(n)(X_j-m) \geq -  \varepsilon\right). 
\end{equation}
Note that by Assumption \ref{as_B}, $t_1(n)\sim
\left(\frac{x}{s}-\frac{s_1}{s}m
  +\frac{\varepsilon}{s}\right)n \label{asymp1}$ as $n \rightarrow
\infty$. Since $c_1(\cdot)$ and $b(\cdot)$ are slowly varying
functions,  Lemma \ref{gsv} implies  that
$c_1\left(t_1(n)\right) \sim c_1(n)$ and $b\left(t_1(n)\right) \sim
b(n)$ as $n \rightarrow \infty$. 
Moreover, note that  for some fixed $\delta \in (0,r)$, we can express 
\[ \log c_1(n)/b(n) n^r = (\log
c_1(n)/\log n) (\log n/n^\delta)(b(n) n^{r-\delta})^{-1},\] 
and the right-hand side goes  to zero as $n \rightarrow \infty$ by 
properties (i) and (iii) of  Proposition 
\ref{props}.  
Furthermore, since the $\{X_j\}$ have finite second
moments by \eqref{allmom}, and  
(B.2) implies that  
$\sum_{j=1, j \neq j^*(n)}^n a_j(n)^2 \leq n (a_{\max}(n))^2
\rightarrow 0$ as $n \rightarrow  \infty$, it follows  that $\sum_{j \in \{1, \ldots, n\}, j \neq j^*(n)} a_j(n) (X_j -
m)$ converges to $0$ in $\mathbb{L}^2$. 
 In turn, this implies that 
$\lim_{n \rightarrow \infty} \mathbb P ( \sum_{j\in \{1, \ldots ,n\}, j \neq j^*(n)}
a_j(n)(X_j-m) \geq -  \varepsilon) = 1.$ 
\iffalse
\[  \lim_{n \rightarrow \infty} \mathbb P \left( \sum \limits_{j\in \{1, \ldots ,n\}, j \neq j^*(n)}
a_j(n)(X_j-m) \geq -  \varepsilon\right) = 1. 
\]
\fi 
Thus, taking logarithms of both sides of (\ref{temp}), then dividing  by
$b(n)n^r$ and sending first $n\rightarrow \infty$, and then
$\varepsilon \downarrow 0$, 
we obtain the lower bound \eqref{lower-bound}. 

%\begin{equation}
%\liminf \limits_{n \rightarrow \infty}\frac{1}{b\left(n\right)n^r}\log
%\mathbb{P}\left( \sum \limits_{j\in \{1, \ldots n\}, j \neq i^*}a_j(n)(X_j-m) \geq -  \varepsilon\right)=0 \label{eq:lower}.
%\end{equation} 
%Now the claim follows as $\varepsilon \rightarrow 0$. 

\subsection{The Upper Bound} \label{sec233}
Let $t_2(n):=n\left(\frac{x}{s}-\frac{s_1}{s}m\right)$.  Then,  we can
write 
\begin{align} \mathbb{P}\left(\bar{S}_n \geq x\right) &\leq A_1^n +
  A_2^n, 
\label{eq:ub}
\end{align}
where, for $n \in \mathbb N$, 
%\begin{align*}
\[ 
A_1^n := 
\mathbb{P}\left(\max \limits_{1 \leq j \leq  n} X_j \geq t_2(n)
\right), \qquad 
A_2^n := \mathbb{P}\left(\bar{S}_n \geq x, \max \limits_{1 \leq j
     \leq  n} X_j < t_2(n) \right). 
\]
%  n\left(\frac{x}{s}-\frac{s_1}{s}m\right)\right), \qquad 
% A_2^n := \mathbb{P}\left(\bar{S}_n \geq x, \max \limits_{1 \leq j
%     \leq  n} X_j < n\left(\frac{x}{s}-\frac{s_1}{s}m\right)\right).
%\]
%\end{align*}
The union bound and the upper tail bound for $X_1$ in \eqref{eq:e1}  imply that  
\begin{equation*}
%\label{bd-a1n}
 A_1^n \leq n \mathbb P ( X_1 \geq t_2(n)) 
\leq n  c_2\left(t_2(n)\right) \cdot \exp\left\{-b\left(t_2(n)\right)
  (t_2(n))^r \right\}. 
\end{equation*}
%n ^r\left(\frac{x}{s}-\frac{s_1}{s}m\right)^r\right\}, \end{equation}
% where we set $t_2(n):=n\left(\frac{x}{s}-\frac{s_1}{s}m\right)$. 
Since $b$ is slowly varying, $b\left(t_2(n)\right) \sim
b\left(n\right)$ as $n \rightarrow \infty$, and properties (i) and
(iii) of  Proposition \ref{props} show that  $\lim_{n \rightarrow
  \infty} \log n /b(n) n^{r}$ $ = \lim_{n \rightarrow \infty}\log c_2(t_2(n))/b(n) n^r
= 0$.  Together with 
the last display, this  implies that 
\begin{equation} 
\label{bd-a1n}\limsup \limits_{n \rightarrow \infty}
  \frac{1}{b\left(n\right)n^r} \log A_1^n \leq \limsup_{n \rightarrow \infty}
  \dfrac{ -(t_2(n))^r}{n^r} = -
  \left(\frac{x}{s}-\frac{s_1}{s}m\right)^r. \end{equation}

Next, we turn to $A_2^n$. Applying the exponential Chebyshev
inequality 
%(which is justified  since all  $X_j$ are bounded on $A_2^n$)  
with a positive 
real parameter $\beta_\zeta (n)/s$ (to be
specified later) we obtain 
\begin{align} 
\label{restri} 
A_2^n 
%= \mathbb{P} \left(\sum \limits_{j=1}^{ n }
%    \frac{a_{j}(n)}{s}X_j \geq \frac{x}{s}, \max\limits_{1 \leq i \leq  n } X_j <
%    t_2(n) \right) \nonumber \\
%    n\left(\frac{x}{s}-\frac{s_1}{s}m\right)\right)\nonumber \\
& \leq \exp\left\{-\beta_\zeta(n)\frac{x}{s}\right\} \cdot 
\prod \limits_{j=1}^{ n }\mathbb{E}
\left[\exp\left\{\beta_\zeta(n) \frac{a_{j}(n)}{s}X_j\right\} \cdot \ind_{\left\{X_j  <t_2(n) \right\}} \right]. 
\end{align}
%      n\left(\frac{x}{s}-\frac{s_1}{s}m\right)\right\}}\right].
Now, for  $\zeta > 0$, 
 define \begin{equation}
\label{def-beta} \beta_\zeta (n):= \zeta n^r
   b\left(n\left(\frac{x}{s}-\frac{s_1}{s}m\right)\right) = \zeta n^r
   b( t_2(n)).\end{equation} 
Then, since $b(\cdot)$ is slowly varying, $\lim_{n \rightarrow 
  \infty}\beta_\zeta(n)/(b(n)n^r) = \zeta$.
Together with \eqref{restri} this implies that 
\begin{align}
\label{preprop}
\limsup_{n \rightarrow \infty} \frac{1}{b(n) n^r} \log A_2^n & \leq -
\zeta \frac{x}{s} + 
\limsup_{n \rightarrow \infty}\frac{1}{b(n) n^r} \sum_{j=1}^n \Lambda_\zeta^j(n), 
\end{align}
where, for $j = 1, \ldots, n$, $n \in \mathbb N$, and $\zeta > 0$,  we define  
\begin{equation}
\label{def-Lambda}  \Lambda_\zeta^j (n) := \log
\mathbb{E}\left[\exp\left\{\beta_\zeta(n)
    \frac{a_{j}(n)}{s}X_j^{(n)}\right\}\right], \quad \mbox{ where }
X_j^{(n)} :=X_j\ind_{\left\{X_j < t_2(n) \right\}}. 
\end{equation}
We now show that the upper bound \eqref{upper-bound} is satisfied if the following
proposition holds. 
\begin{prop}[Boundedness of the remainder] \label{remainder}
For every $\zeta <\left(\frac{x}{s}-\frac{s_1}{s}m\right)^{r-1}$, 
\begin{equation}
\label{eq-remainder} \limsup \limits_{n \rightarrow \infty}
  \frac{1}{b\left(n\right)n^r} \sum \limits_{j=1}^{ n }
    \Lambda^j_\zeta(n)  \leq \zeta m \frac{s_1}{s}.\end{equation}
 \end{prop}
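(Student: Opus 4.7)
The plan is to exploit that $t_j(n) := \beta_\zeta(n) a_j(n)/s$ is small uniformly in $j$: by (B.2), $t_j(n) \le \beta_\zeta(n) a_{\max}(n)/s \sim \zeta b(n) n^{r-1} \to 0$ (since $r<1$ and $b$ is slowly varying, Proposition \ref{props}(iii)). Using $\log(1+x) \le x$ and decomposing $\mathbb E[e^{t_j(n) X_j^{(n)}}] - 1 = t_j(n)\,\mathbb E[X_j^{(n)}] + R_j(n)$ with $R_j(n) := \mathbb E[e^{t_j(n) X_j^{(n)}} - 1 - t_j(n) X_j^{(n)}]$, the strategy is to show the sum of linear terms equals $\zeta s_1 m/s\cdot b(n) n^r + o(b(n) n^r)$ (matching the desired bound), while $\sum_j R_j(n) = o(b(n) n^r)$.

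For the linear term, $\mathbb E[X_j^{(n)}] = \mathbb E[X_1 \ind_{\{X_1 < t_2(n)\}}] \to m$ by dominated convergence; combined with (B.1), giving $\sum_{j=1}^n a_j(n) = s_1 R(1,n) \to s_1$, with $\beta_\zeta(n) = \zeta n^r b(t_2(n))$, and $b(t_2(n))/b(n) \to 1$ (Lemma \ref{gsv}), this yields $(b(n) n^r)^{-1} \sum_j t_j(n) \mathbb E[X_j^{(n)}] \to \zeta s_1 m/s$.

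For the remainder, I would fix $\varepsilon > 0$ and set $T_1(n) := \varepsilon n^{1-r}/(2\zeta b(n)) \to \infty$, so that $t_j(n) T_1 \le \varepsilon$ uniformly in $j$ for large $n$. Split $R_j(n) = A_j + B_j$ into contributions of $\{X_j \le T_1\}$ and $\{T_1 < X_j < t_2(n)\}$ (on $\{X_j \ge t_2(n)\}$ the integrand vanishes because $X_j^{(n)} = 0$). For $A_j$, the elementary bound $e^y - 1 - y \le (y^2/2) e^{y_+}$ with $(t_j(n) X_j)_+ \le \varepsilon$ on $\{X_j \le T_1\}$ gives $A_j \le (e^\varepsilon/2) t_j(n)^2 \mathbb E[X_1^2]$ (finite by \eqref{allmom}). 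By non-negativity of weights, (B.1) and (B.2), $\sum_j t_j(n)^2 \le (\beta_\zeta(n)/s)^2 a_{\max}(n) \sum_j a_j(n) = O(b(n)^2 n^{2r-1})$, so $(b(n) n^r)^{-1} \sum_j A_j = O(b(n) n^{r-1}) = o(1)$ by Proposition \ref{props}(iii) and $r < 1$.

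The main obstacle is $\sum_j B_j$, where the hypothesis $\zeta < C^{r-1}$ with $C := x/s - s_1 m/s$ is crucial. Using $e^y - 1 - y \le e^y$ and integration by parts,
\[
B_j \;\le\; e^{t_j(n) T_1}\, \bar F(T_1) \;+\; t_j(n) \int_{T_1}^{t_2(n)} e^{t_j(n) u}\, \bar F(u)\, du,
\]
with $\bar F(u) := \mathbb P(X_1 > u) \le c_2(u) e^{-b(u) u^r}$ for $u \ge t^*$, by \eqref{eq:e1}. Pick $\eta > 0$ with $(1+\eta) \zeta < C^{r-1}$. The Karamata representation (Lemma \ref{rsv2}) shows that $b(u) u^{r-1}$ is eventually monotone decreasing, so its infimum on $[T_1, t_2(n)]$ is asymptotically $b(t_2(n)) t_2(n)^{r-1} \sim C^{r-1} b(n) n^{r-1}$. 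Comparing with $t_j(n) \le \zeta b(n) n^{r-1}(1+o(1))$ gives $b(u) u^r \ge (1+\eta) t_j(n) u$ on $[T_1, t_2(n)]$ for large $n$, uniformly in $j$; hence $\psi(u) := b(u) u^r - t_j(n) u \ge (\eta/(1+\eta)) b(u) u^r \ge (\eta/(1+\eta)) b(T_1) T_1^r$ (the last step using that $b(u) u^r$ is eventually increasing, by the same representation). Since $b(T_1) T_1^r$ grows like a positive power of $n$ up to slowly varying corrections, $e^{-\psi(u)}$ decays faster than any polynomial in $n^{-1}$; combined with $c_2(u) = u^{o(1)}$ (Proposition \ref{props}(i)), both the integral and the boundary term, summed over $j$ and divided by $b(n) n^r$, vanish. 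The delicate point is exactly this: the condition $\zeta < C^{r-1}$ is precisely what ensures $\psi > 0$ at order $b(n) n^r$ uniformly on $[T_1, t_2(n)]$; without it, the $\beta_\zeta(n) = O(b(n) n^r)$ prefactor could not be absorbed.
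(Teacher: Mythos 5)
Your argument is correct in substance, but it takes a genuinely different route from the paper's. The paper bounds $\Lambda^j_\zeta(n)$ via $\log x\le x-1$ together with a $(k+1)$-term Taylor expansion $e^y-1\le y+\cdots+\tfrac{1}{(k+1)!}y^{k+1}e^y$ for a $k$ chosen so that $r<k/(k+1)$; the terms of order $2,\dots,k$ vanish by moment bounds, and the last term is decoupled by H\"older's inequality into a polynomial factor (killed by \eqref{allmom} and the smallness of $\beta_\zeta(n)a_{\max}(n)$) and an exponential moment $\mathbb{E}[\exp((1+\varepsilon)\beta_\zeta(n)a_{\max}(n)X_1^{(n)}/s)\ind_{\{X_1^{(n)}\ge t^*\}}]$, which is then controlled by Lemma \ref{inteq}, the tail bound \eqref{eq:e1}, and the substitution $y=z/t_2(n)$. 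You instead stop at a second-order remainder $R_j(n)=\mathbb{E}[e^{t_j(n)X_j^{(n)}}-1-t_j(n)X_j^{(n)}]$ and split at an intermediate threshold $T_1(n)\to\infty$ chosen so that $t_j(n)T_1(n)\le\varepsilon$: the bulk is handled by the second moment alone (no H\"older, no choice of $k$), and the tail piece by the same integration-by-parts mechanism applied directly on $[T_1(n),t_2(n)]$. Both proofs ultimately rest on the same crux, which you identify correctly: $\zeta<(x/s-s_1m/s)^{r-1}$ forces $b(u)u^r-(1+o(1))\zeta b(n)n^{r-1}u>0$ at order $b(n)n^r$ on the relevant range of $u$. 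Your version is arguably more elementary and makes the role of the hypothesis on $\zeta$ more transparent, at the cost of needing a uniform lower bound on $b(u)u^{r-1}$ over an interval whose endpoint ratio tends to infinity, which the paper's change of variables partially sidesteps.

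One technical point needs repair: your claim that the Karamata representation makes $b(u)u^{r-1}$ eventually monotone decreasing (and $b(u)u^r$ eventually increasing) is false in general, because the function $\eta(\cdot)$ in \eqref{rsv} is only bounded measurable with a limit at infinity and may oscillate (e.g.\ $b(u)=\exp\{\sin(u)/\log u\}$ is slowly varying but $b(u)u^{r-1}$ is not eventually monotone). However, what you actually use survives: writing, for $T_1(n)\le u\le t_2(n)$,
\begin{equation*}
\frac{b(u)u^{r-1}}{b(t_2(n))t_2(n)^{r-1}}
=\exp\left\{\eta(u)-\eta(t_2(n))+\int_{u}^{t_2(n)}\frac{1-r-\varepsilon(v)}{v}\,dv\right\},
\end{equation*}
the integral is nonnegative once $|\varepsilon(v)|\le 1-r$ on $[T_1(n),\infty)$, and $\eta(u)-\eta(t_2(n))\to 0$ uniformly, so the infimum of $b(u)u^{r-1}$ over $[T_1(n),t_2(n)]$ is indeed $(1-o(1))\,b(t_2(n))t_2(n)^{r-1}$; the analogous computation gives $b(u)u^r\ge(1-o(1))\,b(T_1(n))T_1(n)^r$. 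With the monotonicity claims replaced by these two displayed comparisons, your proof is complete.
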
 
Indeed, given Proposition \ref{remainder}, we can  substitute
\eqref{eq-remainder} into \eqref{preprop} and send 
$\zeta\uparrow \left(\frac{x}{s}-\frac{s_1}{s}m\right)^{r-1}$ 
to conclude that 
\[  \limsup_{n \rightarrow \infty} \frac{1}{b(n) n^r} \log A_2^n  \leq -\left(\frac{x}{s} - \frac{s_1}{s}m \right)^r.  \]
Together with \eqref{eq:ub}, and the analogous bound \eqref{bd-a1n}
for $A_1^n$, we obtain the upper
bound \eqref{upper-bound}.

Thus, to prove the upper bound, it only remains to prove Proposition \ref{remainder}. We use similar techniques as in \cite{21}. 

\begin{proof}[Proof of Proposition \ref{remainder}]
%\textbf{Proof of Proposition \ref{remainder}.]
Fix $\zeta <
(\frac{x}{s}-\frac{s_1}{s}m)^{r-1}$ and denote 
$\beta_\zeta (n)$ and $\Lambda^j_\zeta$ simply as $\beta(n)$ and
$\Lambda^j$.  For the fixed $r \in (0,1)$, we also choose $k \in
\mathbb{N}$ such that $r < k/(k+1)$.
Then, by  the definition \eqref{def-Lambda} of $\Lambda^j$,  the estimates $\log x \leq
x-1$ for $x >0$ and $e^x - 1 \leq x + \frac{1}{2}x^2 + \frac{1}{6}x^3
+ ... + \frac{1}{(k+1)!}x^{k+1}e^x$, finiteness of the moments of
$X_j$ due to \eqref{allmom}, 
and the fact that $\beta(n)/(b(n)n^r) \rightarrow \zeta$  and $\sum_{j=1}^n a_j
\rightarrow 
s_1$ as $n \rightarrow \infty$, we have 
% $\mathbb{E}[X_1^n]< \infty$ for all $n \in \mathbb N$, we deduce that
\begin{eqnarray*}
\limsup \limits_{n \rightarrow \infty}
  \frac{1}{b\left(n\right)n^r}\sum \limits_{j=1}^{ n }
    \Lambda^j(n)  & \leq & \limsup \limits_{n \rightarrow \infty} 
  \frac{1}{b(n)n^r} \left( \sum \limits_{j=1}^n  \sum
    \limits_{i=1}^{k} \frac{\mathbb{E} \left[ \left(\beta(n)
      \frac{a_{j}(n)}{s}X_j^{(n)}\right)^i \right]}{i!}\right)  +
\frac{B_0}{(k+1)!}, 
\end{eqnarray*}
with  
\[  B_0 := \limsup \limits_{n \rightarrow
    \infty}\frac{1}{b(n)n^r} \sum
  \limits_{j=1}^n\left(  \beta(n) \frac{a_{j}(n)}{s}\right)^{k+1}
  \cdot \mathbb{E}\left[ \left(X_j^{(n)}\right)^{k+1} \exp\left(
      \beta(n) \frac{a_{j}(n)}{s}  X_j^{(n)} \right) \right]. \] 
Since, by Assumption \ref{as_B},  $\lim\limits_{n \rightarrow
  \infty} \frac{1}{b(n)n^r} \sum_{j=1}^n \mathbb{E}[
  (\beta(n) \frac{a_{j}(n)}{s}  X_j^{(n)} )^i] = \zeta m
  \frac{s_1}{s}$ if $i=1$, and is zero for  
% $i \in \mathbb N$, 
$i  \neq 1$, this implies 
\[ \limsup \limits_{n \rightarrow \infty}
  \frac{1}{b\left(n\right)n^r}\sum \limits_{j=1}^{ n }
    \Lambda^j(n)  \leq\zeta m
  \frac{s_1}{s} + \frac{B_0}{(k+1)!}. 
\]
%we used the fact that $\lim_{n \rightarrow 
%  \infty}\beta(n)/(b(n)n^r) = \zeta$  and $\sum_{j=1}^n a_j
%\rightarrow 
%s_1$ as $n \rightarrow \infty$, and 
%$B_0$ is the quantity 
To complete the proof of Proposition \ref{remainder}, it suffices to show that $B_0 = 0$. In this regard, we
distinguish between the cases $X_j^{(n)}<t^*$ and $X_j^{(n)} \geq
t^*$, where we recall that for $t \geq t^*$, \eqref{eq:e1} is
satisfied. Specifically, we bound $B_0$  by $\limsup\limits_{n \to \infty} (B_1(n) + B_2(n))$, where
\begin{align}
B_{1}(n)&:= \frac{1}{(k+1)!} \frac{1}{b(n)n^r}\sum
\limits_{j=1}^n\left( \beta(n) \frac{a_{j}(n)}{s}\right)^{k+1}  \cdot
\left(t^*\right)^{k+1} \exp\left( \beta(n) \frac{a_{j}(n)}{s}  t^*
\right) , \label{def-b1n}\\ 
B_{2}(n)&:=\frac{1}{(k+1)!} \frac{1}{b(n)n^r} \sum \limits_{j=1}^n\left(  \beta(n) \frac{a_{j}(n)}{s}\right)^{k+1}  \cdot \mathbb{E}\left[ \left(X_j^{(n)}\right)^{k+1} \exp\left( \beta(n) \frac{a_{j}(n)}{s}  X_j^{(n)} \right) \ind_{\left\{X_j^{(n)} \geq t^* \right\}}\right]. \label{def-b2n}\end{align}
We now show that both $B_1(n)$ and $B_2(n)$ converge to
$0$ as $n \rightarrow \infty$.  
%Recall that $a_{max}(n):=\max_{1 \leq j \leq n} a_j(n)$. 
%and 
Note that \ref{as_B2}, the definition of $\beta(n)$ in \eqref{def-beta} 
and, recalling $r < k/(k+1)$, property (iii) of Proposition
\ref{props} 
imply that
\begin{equation}
\label{justify}
\lim_{n \rightarrow \infty} n \left ( \beta(n) \frac{a_{\max}(n) }{s}
\right)^{k+1} = \lim_{n \rightarrow \infty}  \left(\frac{a_{\max}(n)
    n}{s}\right)^{k+1}  \left(\zeta n^{r - \frac{k}{k+1}} b(n)
\right)^{k+1} = 0, 
\end{equation}
 and 
\begin{equation}
\label{justify2} 
\lim_{n \rightarrow \infty} \left ( \beta(n) \frac{a_{\max}(n) }{s}
\right) = 0. 
\end{equation}
%We will make repeated use of the fact that for $\kappa \in
%(0,\infty)$, 
%by (\ref{eq:f4}), the definition of $\beta(n)$ in \eqref{def-beta} 
%and property (iii) of Proposition \ref{props}, 
%\begin{equation}
%\label{just}
%\frac{\kappa}{\kappa + 1} >
%r  \quad \Rightarrow \quad \lim_{n \rightarrow \infty} n \left ( \beta(n) \frac{a_{\max}(n) }{s}
%\right)^{\kappa+1} = \lim_{n \rightarrow \infty}  \left(\frac{a_{\max}(n)
%    n}{s}\right) \zeta^{k+1} \left(n^{r - \frac{k}{k+1}} b(n)
%\right)^{k+1} = 0. 
%\end{equation}
%A similar argument shows that $\left ( \beta(n) \frac{a_{\max}(n) }{s}
%\right) \rightarrow 0$ as $n \rightarrow \infty$.
Combined with \eqref{def-b1n} and  recalling that  $a_{max}(n):=\max_{1 \leq j \leq n} a_j(n)$,   this
shows that $B_1(n) \rightarrow 0$ as $n \rightarrow \infty$.  

Now, to bound $B_2(n)$, first note that by H\"{o}lder's inequality, 
 for any $\varepsilon >0$ we have 
\begin{align}
\label{holder} &\mathbb{E}\left[ \left(X_1^{(n)}\right)^{k+1} \exp\left( \beta(n) \frac{a_{max}(n)}{s}  X_1^{(n)} \right) \ind_{\{X_1^{(n)} \geq t^* \}} \right] \notag \\ &\leq \mathbb{E}\left[ \left(X_1^{(n)}\right)^{(k+1) \cdot \frac{1+\varepsilon}{\varepsilon}} \ind_{\{X_1^{(n)} \geq t^* \}}  \right]^{\frac{\varepsilon}{1+\epsilon}} \cdot \mathbb{E}\left[ \exp\left((1+\varepsilon) \beta(n) \frac{a_{max}(n)}{s}  X_1^{(n)} \right) \ind_{\{X_1^{(n)} \geq t^* \}}  \right]^{\frac{1}{1+\varepsilon}}.
\end{align}
Due to the finiteness of the moments of $X_1$ assumed in 
\eqref{allmom}, 
the limit in 
\eqref{justify} yields 
%and the elementary  estimate $a_j(n) \leq a_{max}(n)$ for $1 \leq j
%\leq n$, 
\[\limsup \limits_{n \rightarrow \infty} n\cdot\left(  \beta(n)
  \frac{a_{max}(n)}{s}\right)^{k+1} \mathbb{E}\left[
  \left(X_1^{(n)}\right)^{(k+1) \cdot
    \frac{1+\varepsilon}{\varepsilon}} \ind_{\{X_1^{(n)} \geq
  t^*\}}\right]^{\frac{\varepsilon}{1+\varepsilon}} =0. \]
When combined with \eqref{def-b2n} and \eqref{holder}, to prove the 
convergence of  
$B_2(n)$ to zero, it clearly suffices to show 
that 
\begin{equation} \limsup \limits_{n \rightarrow \infty} \frac{1}{b(n)n^r}   \mathbb{E}\left[ \exp\left((1+\varepsilon) \beta(n) \frac{a_{max}(n)}{s}  X_1^{(n)} \right)  \ind_{\{X_1^{(n)} \geq
  t^*\}} \right]^{\frac{1}{1+\varepsilon}} < \infty\label{pr5} \end{equation}
for $\zeta < (1+\varepsilon)^{-1}\left(\frac{x}{s}-\frac{s_1}{s}m\right)^{r-1}$ and the claim follows as $\varepsilon \rightarrow 0$.
To derive an upper bound for the expectation in \eqref{pr5}
we will use  the following integration-by-parts formula.

\begin{lem}[Integration by parts] \label{inteq}
For any random variable $X$ on a probability space $(\Omega, \mathcal F, \mathbb{P})$ and any $\alpha >0$, $a$, $b \in \mathbb{R}$ with $a  < b$ the following relation holds:
\begin{equation*} \mathbb{E}\left[\exp\left(\alpha X\right) \mathds{1}_{\left\{a \leq X \leq b\right\}}\right]=\alpha \int \limits_{a}^{b} \exp\left(\alpha z\right)\mathbb{P}\left(X\geq z\right)dz + \exp\left( \alpha a \right)\mathbb{P}\left(X \geq a\right) - \exp\left( \alpha b\right)\mathbb{P}\left(X > b\right). \end{equation*}
\end{lem}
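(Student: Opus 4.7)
The plan is to reduce the identity to an application of the Fubini--Tonelli theorem after rewriting $e^{\alpha X}$ via the fundamental theorem of calculus. Concretely, on the event $\{a \leq X \leq b\}$ one has
\[
e^{\alpha X} = e^{\alpha a} + \alpha \int_{a}^{X} e^{\alpha z}\, dz = e^{\alpha a} + \alpha \int_{a}^{b} e^{\alpha z}\, \mathds{1}_{\{z \leq X\}}\, dz.
\]
Multiplying both sides by $\mathds{1}_{\{a \leq X \leq b\}}$ and taking expectations, the integrand is nonnegative so Fubini--Tonelli allows interchange of $\mathbb{E}$ and the $dz$-integral, yielding
\[
\mathbb{E}\!\left[e^{\alpha X}\mathds{1}_{\{a \leq X \leq b\}}\right] = e^{\alpha a}\,\mathbb{P}(a \leq X \leq b) + \alpha\int_{a}^{b} e^{\alpha z}\,\mathbb{P}(z \leq X \leq b)\, dz.
\]

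Next I would convert the two-sided probabilities into one-sided tails by the decompositions $\mathbb{P}(a \leq X \leq b) = \mathbb{P}(X \geq a) - \mathbb{P}(X > b)$ and, for $z \in [a,b]$, $\mathbb{P}(z \leq X \leq b) = \mathbb{P}(X \geq z) - \mathbb{P}(X > b)$. Substituting and using the elementary evaluation $\alpha \int_{a}^{b} e^{\alpha z}\, dz = e^{\alpha b} - e^{\alpha a}$, the right-hand side becomes
\[
e^{\alpha a}\mathbb{P}(X \geq a) - e^{\alpha a}\mathbb{P}(X > b) + \alpha\int_{a}^{b} e^{\alpha z}\mathbb{P}(X \geq z)\, dz - \bigl(e^{\alpha b} - e^{\alpha a}\bigr)\mathbb{P}(X > b).
\]
The two $e^{\alpha a}\mathbb{P}(X > b)$ contributions cancel, leaving exactly the claimed identity.

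There is no serious obstacle here; the result is essentially a Riemann--Stieltjes integration by parts against the survival function $F(z) := \mathbb{P}(X \geq z)$. The only points requiring care are: (i) justification of Fubini, which is automatic by nonnegativity of $e^{\alpha z}\mathds{1}_{\{z \leq X\}}\mathds{1}_{\{a \leq X \leq b\}}$; and (ii) the bookkeeping of strict versus non-strict inequalities at the endpoints, which matters if the law of $X$ has an atom at $a$ or $b$ and accounts for the asymmetric appearance of $\mathbb{P}(X \geq a)$ and $\mathbb{P}(X > b)$ in the final formula.
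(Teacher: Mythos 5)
Your proof is correct and complete. The paper in fact states Lemma \ref{inteq} without providing any proof, so there is nothing to compare against; your Fubini--Tonelli argument (write $e^{\alpha X}=e^{\alpha a}+\alpha\int_a^b e^{\alpha z}\mathds{1}_{\{z\le X\}}\,dz$ on the event, interchange, then convert $\mathbb{P}(z\le X\le b)=\mathbb{P}(X\ge z)-\mathbb{P}(X>b)$ and use $\alpha\int_a^b e^{\alpha z}dz=e^{\alpha b}-e^{\alpha a}$ to cancel the $e^{\alpha a}\mathbb{P}(X>b)$ terms) is exactly the standard derivation one would supply, and your remarks on the endpoint conventions and on the harmlessness of $\ge$ versus $>$ under the $dz$-integral are the right points of care.
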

Recalling that $X_j^{(n)} = X_j \ind_{\{X_j < t_2(n)\}}$, applying
  Lemma \ref{inteq} with $a = t^*$ and $b = t_2(n)$, we deduce that 

\begin{align} 
\frac{1}{b(n)n^r}  &\mathbb{E}\left[ \exp\left((1+\varepsilon) \beta(n) \frac{a_{max}(n)}{s}  X_1^{(n)} \right) \ind_{\{X_1^{(n)} \geq t^* \}} \right]\notag \\
&\leq \frac{1}{b(n)n^r} \int \limits_{t^*}^{t_2(n)} (1+\varepsilon) \beta(n) \frac{a_{max}(n)}{s} \exp \left( (1+\varepsilon) \beta(n) \frac{a_{max}(n)}{s}z\right) \mathbb{P}\left( X_1\geq z\right) dz \notag \\
& \quad + \frac{1}{b(n)n^r}  \exp\left((1+\varepsilon)\beta(n)\frac{a_{max}(n)}{s}t^*\right). \label{pr6}
\end{align}

%Recall that $a_{max}(n) \sim s\cdot n^{-1}$ as $n\rightarrow \infty$,
%and hence 
%As $n \rightarrow \infty$, 
Since $b(n) n^r
\rightarrow \infty$, 
the second term on the right-hand side of \eqref{pr6} converges to $0$
by \eqref{justify2}.  
Now, let $\zeta^{*}:=\zeta\cdot
\left(\frac{x}{s}-\frac{s_1}{s}m\right)$. 
% and note that then $(1+\varepsilon)^{-1}\left(\frac{x}{s}-\frac{s_1}{s}m\right)^r$] 
 Inserting the upper bound \eqref{eq:e1}
on the tail of $X_1$, substituting $y:= (t_2(n))^{-1} z$
and recalling the definition of $\beta(n)$ from \eqref{def-beta}, we see that
the first term on the right-hand side of  \eqref{pr6} is bounded above by 
\begin{align} 
&(1+\varepsilon)\zeta^{*} \frac{b(t_2(n))}{b(n)} \frac{na_{max}(n)}{s} \quad
\cdot \int
\limits_{\frac{t^*}{t_2(n)}}^{^1}
I_n( y) dy, \label{pr7} \end{align}
where the integrand $I_n(\cdot)$ is given by \begin{align*}
   I_n( y)&:=c_2
   \left( t_2(n) y\right) \exp \left\{  n^r b\left( t_2(n)\right)
 \left((1+\varepsilon)\zeta^{*} \frac{na_{max}(n)}{s} y -
   \frac{b(t_2(n)y)}{b(
     t_2(n))}\left(\frac{x}{s}-\frac{s_1}{s}m\right)^r
   y^r\right)\right\}, \end{align*}
for $y \in (0,1].$ 
Since $b(\cdot)$ is slowly varying and condition (B.2) holds, we see
that the coefficient in front of the integral in \eqref{pr7} converges
to $(1+\varepsilon) \zeta^{*}$ as $n \rightarrow \infty$.   It now
remains to show that, for every
$\zeta^{*}<(1+\varepsilon)^{-1}\left(\frac{x}{s}-\frac{s_1}{s}m\right)^r$,
the integral in \eqref{pr7} stays bounded as $n\rightarrow \infty$.
% due to assumption $\eqref{eq:e1}$. More precisely,
 By the assumption that $b(\cdot)$ is slowly varying and since $r <
 1$, 
 for any fixed $y \in (0,1]$ and any $\zeta^{*} < (1+\varepsilon)^{-1}
 \left(\frac{x}{s}-\frac{s_1}{s}m\right)^{r}$, it follows that  $I_n( y) \rightarrow 0$
 as $n \rightarrow \infty$. Therefore, we need to examine the lower
 limit of integration
 $y_n:=t^{*}/(t_2(n))$ 
and show that  $I_n( y_n)$ stays bounded as $n \rightarrow
\infty$.   Recalling that  $t_2(n) = n (\frac{x}{s} - \frac{s_1}{s}
m)$ and $\zeta^* = \zeta (\frac{x}{s} - \frac{s_1}{s}
m)$, note that
\[  I_n (y_n)  =   c_2(t^*) \exp \left\{ n^{r-1} b(t_2(n))
  (1+\varepsilon)
  \zeta \frac{n a_{\max} (n)}{s} t^*- b(t^*) (t^*)^r \right\}. 
\]
Since $na_{max}(n) \sim s$,   
% by (B.2),
$b(t_2(n)) 
  \sim b(n)$ and $n^{r-1} b(n) \rightarrow 0$ as $n \rightarrow
\infty$, 
% by property (iii) of Proposition \ref{props},
 it follows that $\limsup_{n \rightarrow
   \infty} I_n(y_n)$ is finite.

\iffalse
Since $a_{max}(n) \sim sn^{-1}$, $b(t_2(n))   \sim b(n)$ as $n
\rightarrow \infty$, 
 we can find a constant $L >0$ such that, for $n$ large enough and $\zeta^{*} < (1+\varepsilon)^{-1} \left(\frac{x}{s}-\frac{s_1}{s}m\right)^{r}$,
\begin{equation*}
I_n(y_n) \leq c_2(t^*)\cdot \exp \left\{ K
  \left(\frac{x}{s}-\frac{s_1}{s}m\right)^{r-1}t^{*}b(n)/n^{1-r}-b(t^*){t^{*}}^r\right\}.\end{equation*}
 By property (iii) of Proposition \ref{props},  $b(n)/n^{1-r} \to 0$
 for $n \to \infty$, and hence, $\limsup_{n \rightarrow
   \infty} I_n(y_n) < \infty$. 
\fi 
Thus, we have shown that $B_2^n$ converges to zero as $n \rightarrow
\infty$ and hence, that $B_0 = 0$.  This  completes the proof of Proposition
\ref{remainder}, and hence, the upper bound \eqref{upper-bound} and
Theorem \ref{MR} follow. 
\end{proof}

\section{Examples} \label{Examples}
\subsection{Example 1: Random Weights} \label{Example1}

We consider a sequence of strictly positive i.i.d. random variables 
$\{\theta_j\}_{j \in \mathbb N}$ on $(\Omega, \mathcal F, \mathbb P)$ 
%with $\mathbb{P}\left( \theta_1 > 0\right) =1$, 
and assume that they are $\mathbb{P}$-almost surely uniformly bounded,
that is, their essential supremum is finite: 
\begin{equation} M^{*}:= \inf \left\{a \in \mathbb R: \mathbb P \left( \theta_1 > a\right)=0\right\} < \infty. \label{rwem2} \end{equation}
%Note that, by \eqref{rwem1}, it immediately follows that for the distribution of 
%$\theta_1$ all moments exist, i.e. 
%for every $\nu \in \mathbb N$ we have $ \mathbb{E}[\theta_1^\nu]< \infty$. 
%Define the sequence of weights $\left(a_j(n, \theta_j)\right)$by
%\begin{equation} a_j(n, \theta_j):=\begin{cases} \frac{\theta_j}{n}, &1\leq j \leq n \\ 0, &\text{else} 
%\end{cases}. \label{rwem3} \end{equation}
%and consider the randomly weighted empirical mean 
%\begin{equation} Q(n):=\sum \limits_{j=1}^{n} a_j(n,\theta_j)X_j=\frac{1}{n}\sum \limits_{j=1}^{n} 
%\theta_jX_j. \label{rwem4} \end{equation}

Furthermore, define the triangular array  of weights $\left\{a_j(n,
  \theta_1,..., \theta_n), j = 1, \ldots, n\right\}_{n \in \mathbb{N}}$ by
\begin{equation} a_j(n, \theta_1, ..., \theta_n):=\frac{\theta_j}{\sum
    \limits_{i=1}^{n}\theta_i}, \quad j = 1, \ldots, n,  n \in \mathbb N,
\label{rp3} \end{equation}
and let $\{\bar{S}_n\}_{n \in \mathbb N}$ be the corresponding sequence
of weighted sums: 
\begin{equation} \bar{S}_n:=\sum \limits_{j=1}^{n}
  a_j(n,\theta_1,...,\theta_n)X_j=\sum \limits_{j=1}^{n}
  \frac{\theta_j}{\sum \limits_{i=1}^{n}\theta_i}X_j, \quad n \in \mathbb N. \label{rp4} \end{equation}
We prove a large deviation theorem for the
sequence of random weighted sums $\{ \bar{S}_n\}_{n \in \mathbb{N}}$, 
both in the ``quenched''  (i.e., conditioned on the weight
sequence $\{\theta_j\}_{j \in \mathbb N}$), and ``annealed'' (i.e., averaged
over the weight sequence) cases.    
Note that $\bar{S}_n$ can be
viewed as a random projection of the data $\{X_i\}$.  
Random projections have attracted much interest in
recent research in applied mathematics as an important tool in
data analysis and dimensionality reduction \cite{BinMan01}, as well as
in asymptotic geometric analysis \cite{DiaFre84, Mec12}. 

\begin{thm}[Large Deviations for Random Weights, Stretched Exponential Tails] \label{quran44}
Let $\{ X_j \}_{j \in \mathbb{N}}$ be a sequence of i.i.d. random variables such
as in Theorem \ref{MR} and let $\{\theta_j\}_{j \in \mathbb{N}}$ be a sequence of
i.i.d. random variables which is independent
of the sequence $\{X_j\}_{j \in \mathbb{N}}$, and is  almost surely
uniformly bounded by $M^*$ as specified in \eqref{rwem2}. Define $\bar{S}_n$ by \eqref{rp4}.
Then, for $x> m$, we have 
\begin{equation}\label{quenched} \lim \limits_{n \rightarrow \infty} \frac{1}{b\left(n\right)n^r}\log \mathbb P\left(\bar{S}_n \geq x\right|\theta_1, \theta_2,...) =-\left[\left(\frac{\mathbb{E}[\theta_1]}{M^{*}}\right) \left(x-m\right) \right]^r \quad \mathbb P \text{-a.s.},\end{equation}
and
\begin{equation}\label{annealed} \lim \limits_{n \rightarrow \infty} \frac{1}{b\left(n\right)n^r}\log \mathbb P\left(\bar{S}_n \geq x\right) =-\left[\left(\frac{\mathbb{E}[\theta_1]}{M^{*}}\right) \left(x-m\right) \right]^r. \end{equation}
\end{thm}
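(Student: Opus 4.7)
The plan is to first establish the quenched result \eqref{quenched} by applying Theorem~\ref{MR} conditionally on $\{\theta_j\}$, and then to derive the annealed limit \eqref{annealed} from the quenched one by carefully handling the randomness in the weights.

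For the quenched statement I will verify that, for $\mathbb{P}$-almost every realization of $\{\theta_j\}$, the triangular array $a_j(n,\theta_1,\ldots,\theta_n) = \theta_j/\sum_{i=1}^n \theta_i$ satisfies Assumption~\ref{as_B} with $s_1 = 1$ and $s = M^*/\mathbb{E}[\theta_1]$. The identity $\sum_{j=1}^n a_j(n,\theta) = 1$ gives \ref{as_B1} with $s_1 = 1$; for \ref{as_B2}, the strong law of large numbers yields $\frac{1}{n}\sum_i \theta_i \to \mathbb{E}[\theta_1]$ a.s., while the essential-supremum property $\mathbb{P}(\theta_1 > M^* - \varepsilon) > 0$ for every $\varepsilon > 0$, combined with a Borel--Cantelli argument, gives $\max_{1\leq j \leq n} \theta_j \to M^*$ a.s., so $n a_{\max}(n,\theta) \to M^*/\mathbb{E}[\theta_1]$ almost surely. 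Since $\{X_j\}$ is i.i.d.\ and independent of $\{\theta_j\}$, applying Theorem~\ref{MR} conditionally on $\{\theta_j\}$ produces \eqref{quenched} with rate $C := ((\mathbb{E}[\theta_1]/M^*)(x-m))^r$.

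With the quenched statement in hand, the annealed lower bound is straightforward: writing $f_n := \mathbb{P}(\bar{S}_n \geq x\,|\,\theta_1,\theta_2,\ldots)$, for any $\varepsilon>0$ the event $G_n := \{f_n \geq e^{-b(n)n^r(C+\varepsilon)}\}$ has $\mathbb{P}(G_n) \to 1$, so $\mathbb{P}(\bar{S}_n \geq x) = \mathbb{E}[f_n] \geq e^{-b(n)n^r(C+\varepsilon)}\mathbb{P}(G_n)$ yields $\liminf_n \frac{1}{b(n)n^r} \log \mathbb{P}(\bar{S}_n \geq x) \geq -(C+\varepsilon)$; sending $\varepsilon \downarrow 0$ finishes the lower bound in \eqref{annealed}.

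The annealed upper bound is the main obstacle. A naive bounded-convergence argument fails because the quenched limit provides no quantitative rate and the scale $b(n)n^r$ is too coarse to absorb atypical $\theta$-realizations. My plan is to revisit the proof of Theorem~\ref{MR} in an annealed fashion. With $t_2(n) := n(x-m)/s$, I decompose
\[
\mathbb{P}(\bar{S}_n \geq x) \leq \mathbb{P}(\max_{1\leq j\leq n} X_j \geq t_2(n)) + \mathbb{E}[\mathbb{P}(\bar{S}_n \geq x,\,\max_j X_j < t_2(n)\,|\,\theta)] =: A_1^n + \mathbb{E}[A_2^n(\theta)].
\]
The term $A_1^n$ is $\theta$-independent and produces the rate $-C$ exactly as in Section~\ref{sec233}. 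For the second term, I will introduce the typical event $A_{n,\delta} := \{\frac{1}{n}\sum_i \theta_i \geq \mathbb{E}[\theta_1]-\delta\}$: off $A_{n,\delta}$, the bound $A_2^n \leq 1$ combined with Cram\'er's theorem for the bounded sequence $\{\theta_j\}$ gives $\mathbb{P}(A_{n,\delta}^c) \leq e^{-nI(\delta)}$, which is negligible on the scale $b(n)n^r$ since $r<1$ and $b$ slowly varying makes $I(\delta)n^{1-r}/b(n) \to \infty$. On $A_{n,\delta}$ the \emph{uniform} bound $n a_{\max}(n,\theta) \leq s_\delta := M^*/(\mathbb{E}[\theta_1]-\delta)$ is precisely what allows me to rerun the exponential Chebyshev and moment-expansion argument of Proposition~\ref{remainder}, with $s_\delta$ in place of $s$, uniformly in $\theta\in A_{n,\delta}$, yielding $A_2^n(\theta)\ind_{A_{n,\delta}} \leq \exp(b(n)n^r(-((x-m)/s_\delta)^r + o(1)))$. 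Taking expectations and combining with the $A_1^n$ bound gives $\limsup \frac{1}{b(n)n^r}\log \mathbb{P}(\bar{S}_n \geq x) \leq -((x-m)/s_\delta)^r$; letting $\delta\downarrow 0$ completes the proof.
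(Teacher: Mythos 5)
Your argument is correct, and the quenched part plus the annealed lower bound are fine, but for the annealed upper bound you take a genuinely different route from the paper. The paper does not work with the conditional probabilities at all in the annealed step: it rewrites $\{\bar S_n \geq x\}$ as $\{\tfrac1n\sum_j \theta_j X_j \geq x\cdot\tfrac1n\sum_i\theta_i\}$, disposes of the random denominator via Cram\'er's theorem for the bounded $\theta_i$ (whose complementary events have exponentially small probability, hence are negligible on the scale $b(n)n^r$ since $n^{1-r}/b(n)\to\infty$), and then applies Theorem \ref{MR} a second time to the new i.i.d.\ sequence $\{\theta_j X_j\}$ with deterministic weights $1/n$. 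The price is a computation of the upper and lower tails of the product $\theta_1X_1$ (stretched exponential with slowly varying factors built from $b(\cdot/M^*)$ and $b(\cdot/(M^*-\delta))$ respectively, which is exactly why the paper states \eqref{lower-bound} and \eqref{upper-bound} separately under only the corresponding one-sided tail hypothesis); the payoff is that no part of the proof of Theorem \ref{MR} needs to be reopened. Your route instead keeps the self-normalized form, obtains the annealed lower bound cheaply from the quenched one (correct: $\mathbb{E}[f_n]\geq e^{-b(n)n^r(C+\varepsilon)}\mathbb{P}(f_n\geq e^{-b(n)n^r(C+\varepsilon)})$ with the latter probability tending to $1$), and for the upper bound restricts to the typical set $A_{n,\delta}$ and reruns the truncation-plus-Chebyshev argument uniformly there. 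This does work: on $A_{n,\delta}$ one has the deterministic bounds $\sum_j a_j(n,\theta)=1$, $na_{\max}(n,\theta)\leq s_\delta$ and $\sum_j a_j(n,\theta)^i\leq (s_\delta/n)^{i-1}$, and an inspection of Section \ref{sec233} confirms that the upper-bound machinery only uses an upper bound on $na_{\max}(n)$ together with convergence of $\sum_j a_j(n)$, so all error terms are uniform over $A_{n,\delta}$. But be aware that this uniformity is an extra verification that a bare citation of Theorem \ref{MR} (stated for a fixed deterministic array) does not license; you flag it correctly but assert rather than carry out the re-derivation, and you should match the truncation level to the effective constant (truncate at $n(x-m)/s_\delta$ rather than $n(x-m)/s$ so that both terms of the decomposition come out at rate $-((x-m)/s_\delta)^r$ before sending $\delta\downarrow 0$). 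Both approaches are sound; the paper's is shorter, yours avoids any tail computation for $\theta_1X_1$.
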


\begin{proof} The proof of \eqref{quenched} is a direct application of Theorem \ref{MR}. First of all, note that for every $n \in \mathbb N$, $\sum_{j=1}^n a_j(n,\theta_1,...,\theta_n)=1$ almost surely,
  and hence $s_1=1$, where $s_1$  is the quantity defined in \ref{as_B1}.  Furthermore,
\begin{equation} n\cdot a_{max}(n, \theta_1,..., \theta_n)=\frac{n\cdot \max\{\theta_j: 1 \leq j\leq n \}}{\sum \limits_{i=1}^{n}\theta_i} = \frac{\max\{\theta_j: 1 \leq j\leq n \}}{\frac{1}{n}\sum \limits_{i=1}^{n} \theta_i}. \end{equation}
It is easy to check that almost surely,  $\max\{\theta_j: 1 \leq j\leq n \}
\rightarrow M^{*}$ as $n\rightarrow \infty$. By the strong law of
large numbers, it follows that almost surely, $n\cdot a_{max}(n,
\theta_1,...,\theta_n) \rightarrow s:= M^{*}/\mathbb{E}[\theta_1]$  as
$n \rightarrow \infty$. By Theorem \ref{MR} we conclude that, for $x
>m$, the quenched asymptotics \eqref{quenched} are valid.  

We now turn to the proof of \eqref{annealed}. 
Note that we have 
\begin{equation} 
\mathbb{P}\left(\bar{S}_n \geq x\right) = \mathbb{P}\left(\frac{\frac {1}{n} \sum\limits_{j=1}^{n} \theta_j X_j}{\frac{1}{n}\sum\limits_{i=1}^{n}\theta_i} \geq x\right).
\end{equation}
Now, $\frac{1}{n}\sum_{i=1}^{n}\theta_i \to
\mathbb{E}[\theta_1]$, $\mathbb{P}$-almost surely, and the probability
of a deviation decays exponentially in $n$, due to Cram\'er's Theorem
(recall that the $\{\theta_i\}$ are uniformly bounded!).
We will now show that 
\begin{equation}\label{simpel}
\lim \limits_{n \rightarrow \infty} \frac{1}{b\left(n\right)n^r}\log \mathbb P\left(\bar{S}_n \geq x\right)
\approx \lim \limits_{n \rightarrow \infty} \frac{1}{b\left(n\right)n^r}\log \mathbb P\left(\frac{1}{n}\sum \limits_{j=1}^{n} 
\theta_j X_j \geq \mathbb{E}[\theta_1] x\right),
\end{equation}
in the sense explained in  \eqref{ub} and \eqref{lb} below.  
Fix $\delta > 0$ and consider the events $F_n: = \{\frac{1}{n}\sum_{i=1}^{n}\theta_i \geq (1-\delta)\mathbb{E}[\theta_1]\}$ 
and
their complements $F_n^c$ for $ n \in \mathbb{N}$.  Then,
$\mathbb P\left(\bar{S}_n \geq x\right) \leq  \mathbb P(\frac{1}{n}\sum_{j=1}^{n} 
\theta_j X_j \geq (1-\delta) \mathbb{E}[\theta_1]x ) + \mathbb
P(F_n^c)$, and since $\mathbb P(F_n^c)$  decays exponentially in $n$,
it follows that for any $\delta > 0$, 
\begin{equation}\label{ub}
\limsup_{n \rightarrow \infty} \frac{1}{b\left(n\right)n^r}\log \mathbb P(\bar{S}_n \geq x)
\leq \limsup_{n \rightarrow \infty}  \frac{1}{b\left(n\right)n^r}\log \mathbb P\left(\frac{1}{n}\sum \limits_{j=1}^{n} 
\theta_j X_j \geq (1-\delta)\mathbb{E}[\theta_1] x\right).
\end{equation}
 On the other hand, with 
$G_n: = \{\frac{1}{n}\sum_{i=1}^{n}\theta_i \leq (1+\delta)\mathbb{E}[\theta_1]\}$, we have
$\mathbb P(\bar{S}_n \geq x) \geq  \mathbb P(\{\bar{S}_n \geq x\} \cap G_n) \geq  \mathbb P(\frac{1}{n}\sum_{j=1}^{n} 
\theta_j X_j \geq (1+ \delta) \mathbb{E}[\theta_1]x) -  \mathbb
P(G_n^c)$, and since  $\mathbb P(G_n^c)$ decays exponentially in $n$,
we have 
\begin{equation}\label{lb}
\liminf_{n \rightarrow \infty} \frac{1}{b\left(n\right)n^r}\log \mathbb P(\bar{S}_n \geq x)
\geq \liminf_{n \rightarrow \infty}  \frac{1}{b\left(n\right)n^r}\log \mathbb P\left(\frac{1}{n}\sum \limits_{j=1}^{n} 
\theta_j X_j \geq (1+ \delta)\mathbb{E}[\theta_1] x\right). 
\end{equation}
 Looking at the right-hand sides of \eqref{ub} and \eqref{lb} 
%and \eq{\eqref{simpel}, 
we are in the situation of Theorem \ref{MR} with i.i.d. random variables 
$\theta_j X_j$ and weights $a_j(n) = \frac{1}{n}, j = 1, \ldots ,n$
that clearly satisfy Assumption B with $s = s_1 = 1$ and $R(\nu, 1) = 1$
for all $\nu \in \mathbb{N}$.  Considering the tail of $\theta_1 X_1$, we see that 
due to \eqref{eq:e1}, for $t\geq t^*$,
$\mathbb P(\theta_1 X_1 \geq t) \leq  \mathbb P(X_1 \geq t/M^*) \leq c_2(t/M^*) \exp(-b(t/M^*) t^r (M^*)^{-r})$.
On the other hand, for $t\geq t^*$, again by \eqref{eq:e1}, $\mathbb P(\theta_1 X_1 \geq t) \geq \mathbb P(\theta_1 \geq M^* -\delta)
\mathbb P(X_1 \geq t/(M^* - \delta)) \geq  \mathbb P(\theta_1 \geq M^* -\delta)c_1(t/(M^* -\delta))\exp(-b(t/(M^* -\delta))t^r(M^* -\delta)^{-r})$.
The proof is completed by applying the lower and upper bounds in \eqref{lower-bound}
and \eqref{upper-bound}, respectively, and then sending $\delta
\downarrow 0$ to obtain \eqref{annealed}. 
\end{proof}

\begin{rem}{\em 
The equality of the quenched and annealed rate
functions in \eqref{quenched}  and \eqref{annealed}, respectively, is
characteristic of our regime; 
it is in sharp contrast to the case of light-tailed random variables
$X_j$, that is, random variables $X_j$ satisfying
\eqref{expmom}.   In the light-tailed case,
$\mathbb P\left(\bar{S}_n \geq x\right|\theta_1, \theta_2,...)$ and $\mathbb P\left(\bar{S}_n \geq x\right)$ both decay exponentially in $n$, but the rate functions will in general not be the same.
This was one of the motivations for the present paper, and will be treated in forthcoming work.}
\end{rem}

\subsection{Example 2: Kernel Functions} \label{Example2}

In non-parametric regression kernels are frequently used as weighting functions. They are an important tool to smooth data.  Applications include the approximation of probability density functions and conditional expectations.

\begin{defn}[Kernel] \label{kernels}
A kernel is an integrable function $k: [-1,1] \rightarrow [0, \infty)$ satisfying the following two requirements:
\renewcommand{\labelenumi}{(\roman{enumi})}
\begin{enumerate}
   \item $\int \limits_{-1}^1 k(u)du=1$.
   \item $k(-u)=k(u) \quad \forall u \in [0,1 ] $. 
\end{enumerate}
\end{defn}

Define the triangular array of weights $\left\{a_j(n), j = 1, \ldots, n\right\}_{n \in \mathbb{N}}$ by
\begin{equation} a_j(n):=\frac{1}{n}\cdot k\left(2 \cdot \frac{j-n/2}{n}\right), \quad j = 1, \ldots, n,  n \in \mathbb N,
\label{ker3} \end{equation}
and let $\{\bar{S}_n\}_{n \in \mathbb N}$ be the corresponding sequence
of weighted sums: 
\begin{equation} \bar{S}_n:=\sum \limits_{j=1}^{n}
  a_j(n)X_j=\frac{1}{n}\sum \limits_{j=1}^{n}
  k\left(2 \cdot \frac{j-n/2}{n}\right)X_j, \quad n \in \mathbb N. \label{ker4} \end{equation}

\begin{thm}[Large Deviations for Kernel Weighted Sums, Stretched Exponential Tails]
Let $\{ X_j \}_{j \in \mathbb{N}}$ be a sequence of i.i.d. random variables such
as in Theorem \ref{MR} and let $k: [-1,1] \rightarrow [0, \infty)$ be a kernel.
Define $\bar{S}_n$ by \eqref{ker4}.
Then, for $x> m$, we have 
\begin{equation} \lim \limits_{n \rightarrow \infty} \frac{1}{b\left(n\right)n^r} \log \mathbb P \left( \bar{S}_n \geq x \right) = -\left(\sup \limits_{x \in [-1,1]} k(x)\right)^{-r}\left({x}-m\right)^r. \end{equation}\end{thm}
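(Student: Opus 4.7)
The plan is to recognize this statement as a direct corollary of Theorem \ref{MR} applied to the weight sequence $a_j(n) = (1/n) k(2(j-n/2)/n)$. The only genuine work is to verify Assumption \ref{as_B} for these weights and to identify the constants $s_1$ and $s$ appearing there; the conclusion then drops out by substitution into \eqref{eq:e6}.

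To check \ref{as_B1}, I would interpret $\sum_{j=1}^n a_j(n)$ as a Riemann sum. Setting $u_j := (2j - n)/n$, the points $u_j$ partition an interval converging to $[-1,1]$ with uniform mesh $2/n$, so
$$\sum_{j=1}^n a_j(n) \;=\; \frac{1}{2} \cdot \frac{2}{n} \sum_{j=1}^n k(u_j) \;\longrightarrow\; \frac{1}{2}\int_{-1}^{1} k(u)\,du,$$
by integrability of $k$ and property (i) of Definition \ref{kernels}. This identifies the constant $s_1$ and shows that the ratio $R(1,n)$ converges to $1$, giving \ref{as_B1}. For \ref{as_B2}, I would use the identity
$$n \cdot a_{\max}(n) \;=\; \max_{1 \leq j \leq n} k(u_j),$$
which converges to $\sup_{u \in [-1,1]} k(u) =: s$ as the grid $\{u_j\}$ becomes dense in $[-1,1]$.

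Once Assumption \ref{as_B} is in place with these values of $s_1$ and $s$, Theorem \ref{MR} applies directly to the $\{\bar S_n\}$ defined in \eqref{ker4}: the rate function it supplies is $(x/s - s_1 m/s)^r$, and after substituting $s = \sup_{[-1,1]} k$ one recovers the claimed identity. No new large deviation estimates are needed; the proof is simply a bookkeeping exercise that translates the hypotheses on $k$ into the hypotheses of the master theorem.

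The one mildly delicate point, and therefore the step I would pay most attention to, is the regularity of $k$. Both the Riemann-sum convergence in \ref{as_B1} and the discrete-maximum convergence in \ref{as_B2} require more than bare Lebesgue integrability of $k$: one needs, for instance, Riemann integrability (so that midpoint/right-endpoint sums converge to the integral) and enough regularity (e.g.\ continuity, which is standard for kernels used in non-parametric regression) for $\max_j k(u_j)$ to approach $\sup_{[-1,1]} k$. Assuming continuity of $k$, which holds in virtually all practical examples, both convergences are routine; otherwise a short argument exploiting symmetry and integrability of $k$ is required, but presents no fundamental difficulty.
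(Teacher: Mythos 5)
Your overall strategy coincides with the paper's: both reduce the theorem to a substitution into Theorem \ref{MR} after identifying $s_1$ and $s$ for the weights \eqref{ker3}. Where you differ is in how $s$ is identified: the paper computes the sequence $s_\nu=\int_{-1}^1 k^\nu(u)\,du$ coming from Assumption \ref{as_A} and obtains $s=\lim_\nu s_\nu^{1/\nu}$ as the $L^\nu\to L^\infty$ norm limit, whereas you verify \ref{as_B2} directly from the identity $n\,a_{\max}(n)=\max_j k(u_j)$. Your route is cleaner, since it avoids any appeal to Assumption \ref{as_A} and Lemma \ref{Bisbetter}, and your closing caveat about the regularity of $k$ is well taken: bare integrability neither makes the Riemann sums converge nor forces $\max_j k(u_j)$ to approach $\sup k$ (or even the essential supremum of $k$); some such hypothesis, e.g.\ continuity, is genuinely needed and is glossed over in the paper as well.

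However, your final step does not close. Your Riemann-sum computation --- which is the correct one for the weights as defined in \eqref{ker3}, since the grid $u_j=(2j-n)/n$ has mesh $2/n$ --- gives $s_1=\tfrac12\int_{-1}^1 k(u)\,du=\tfrac12$ by property (i) of Definition \ref{kernels}. Substituting $s_1=\tfrac12$ and $s=\sup k$ into \eqref{eq:e6} yields the rate $-\left(\sup k\right)^{-r}\left(x-m/2\right)^r$ on $x>m/2$, not the asserted $-\left(\sup k\right)^{-r}\left(x-m\right)^r$ on $x>m$; the latter requires $s_1=1$. You state that ``one recovers the claimed identity'' without actually carrying out the substitution, and so miss this mismatch. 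A sanity check with $k\equiv\tfrac12$ makes it vivid: then $\bar S_n=\tfrac12\bar X_n\to m/2$ almost surely, so the deviation threshold cannot be $m$. The discrepancy traces to a normalization slip between \eqref{ker3} and the stated conclusion: the paper's own proof asserts $s_\nu=\int_{-1}^1 k^\nu(u)\,du$, i.e.\ $s_1=1$, dropping the same factor $\tfrac12$ that your computation correctly retains. Either the weights should be $\tfrac2n k(\cdot)$ or the conclusion should be recentered at $m/2$; as written, your argument establishes a statement different from the one claimed, and you should have flagged the inconsistency rather than asserting agreement.
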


\begin{proof} The proof is a direct application of Theorem \ref{MR}. Recall the definition of the quantities $\{s_\nu\}_{\nu \in \mathbb N}$ from Assumption \ref{as_B}. It is straightforward to check that $s_\nu = \int \limits_{-1}^1 k^{\nu}(u) du$ (in particular $s_1=1$). Therefore,
\begin{equation*} s = \lim \limits_{\nu \rightarrow \infty} \left({\int \limits_{-1}^1 k^\nu(u) du}\right)^{1/\nu}. \end{equation*}
Since the $p$-norm converges to the supremum norm as $p \rightarrow \infty$, we conclude that $s= \sup \limits_{x \in [-1,1]} k(x)$.\end{proof}

{\bf Acknowledgments. } N. Gantert and F. Rembart thank the Division of Applied
  Mathematics, Brown University, Providence, for its
  hospitality. N. Gantert further thanks ICERM, Providence, for an
  invitation to the program ``Computational Challenges in
  Probability'' where this work was initiated.

\end{document}